\newif\ifpdf
\numberwithin{equation}{section}       
\newtheorem{prop} {Proposition} [section]
\newtheorem{thm}[prop] {Theorem} 
\newtheorem{defi}[prop] {Definition}
\newtheorem{lem}[prop] {Lemma}
\newtheorem{prop-def}[prop]{Proposition-Definition}
\newtheorem{rem}[prop]{Remark}
\newcommand{\C}{{\mathbb{C}^n}}
\newcommand{\bigzero}{\mbox{\normalfont\Large 0}}
\newcommand{\tr}{\operatorname{tr}}
\newcommand{\Om}{{\Omega}}
\newcommand{\pa}{{\partial}}
\newcommand{\Le}{{\mathcal L}}
\title{A remark on Oka's lemma and a geometric property of pseudoconvex domains}
\date{\today}
\author{ S.Dinew*, \.Z. Dinew }
\address{Jagiellonian University 30-348 Krak{\'o}w, {\L}ojasiewicza 6, Poland}
\email{slawomir.dinew@im.uj.edu.pl}
\address{Jagiellonian University 30-348 Krak{\'o}w, {\L}ojasiewicza 6, Poland}
\email{zywomir.dinew@uj.edu.pl}
\keywords{distance function, plurisubharmonic, subharmonic, removable singularity, pseudoconvex, mean convex, principal curvatures}
\subjclass[2020]{Primary: 32U30; Secondary  31B05, 32D20, 32U15, 32E40, 32T27, 51K05, 53C15\\
	* This author is partially supported  by grant no. 2021/41/B/ST1/01632
	from the National Science Center, Poland}
\begin{document}

	\begin{abstract}
	A direct proof of Oka's lemma on the relation of holomorphic convexity and the properties of the distance to the boundary function  is provided.	Some related problems for subharmonicity properties of this function are also studied. A new geometric property of pseudoconvex domains is described.
	\end{abstract}
\maketitle

	\section*{Introduction}
A textbook result, due to K. Oka, states that a domain of holomorphy $\Om$ in $\C$, not identical with the whole space, is (Hartogs) pseudoconvex i.e., the function $-\log(d_{\pa\Om}(z))$ is plurisubharmonic in $\Om$ with $d_{\pa\Om}$ being the Euclidean distance to $\pa\Om$.

The classical proof of Oka's lemma (see for example \cite{GR}, Chapter IX, Theorem D.4) is by contradiction. More precisely, the non-plurisubharmonicity of $-\log(d_{\pa\Om}(z))$ yields the existence of a pathological family of holomorphic discs near a boundary point of $\Om$ which leads to a contradiction. In \cite{HM} a question was raised whether a {\it direct proof} is possible. The main thrust would then be to show the result for smoothly bounded domains of holomorphy, which are well known to be {\it Levi pseudoconvex}. In this direction the following result was established:
\begin{thm}[Herbig-McNeal, \cite{HM}]\label{smooth}
Let $\Om$ be a smoothly bounded Levi pseudoconvex domain in $\C$. Then, there is a neighborhood $U$ of $\pa\Om$, such that $-\log(d_{\pa\Om}(z))$ is plurisubharmonic on $\Om\cap U$.	
\end{thm}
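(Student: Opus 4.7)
The plan is to work in a tubular neighborhood $U$ of $\pa\Om$ where the signed distance $\rho(z):=-d_{\pa\Om}(z)$ is smooth, $|\nabla\rho|\equiv 1$, and the nearest-point projection $\pi\colon U\to\pa\Om$ is well defined and smooth. Setting $\phi:=-\log(-\rho)=-\log d_{\pa\Om}$ and writing $d(z):=d_{\pa\Om}(z)$, a direct computation of the complex Hessian gives
\begin{equation*}
\phi_{i\bar j}=\frac{1}{\rho^{2}}\Bigl(-\rho\,\rho_{i\bar j}+\rho_{i}\rho_{\bar j}\Bigr),
\end{equation*}
so plurisubharmonicity of $\phi$ on $U\cap\Om$ is equivalent to the pointwise Hermitian inequality
\begin{equation*}
d(z)\sum_{i,j}\rho_{i\bar j}(z)\,w_{i}\bar w_{j}+\Bigl|\sum_{i}\rho_{i}(z)\,w_{i}\Bigr|^{2}\geq 0\qquad(w\in\C).
\end{equation*}

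The next step is to reduce this inequality at $z\in U\cap\Om$ to the Levi form at the nearest boundary point $p:=\pi(z)$. I decompose $w=\beta\,N(p)+w^{T}$, where $N(p):=2\overline{\pa\rho(p)}$ is complex-normal (normalized so that $\sum_{i}\rho_{i}(p)N_{i}(p)=1$, using $|\pa\rho(p)|^{2}=\tfrac12$) and $w^{T}$ lies in the complex tangent space $\{v\in\C:\sum_{i}\rho_{i}(p)v_{i}=0\}$. The Levi pseudoconvexity hypothesis of Theorem~\ref{smooth} then gives $\sum_{i,j}\rho_{i\bar j}(p)w^{T}_{i}\bar w^{T}_{j}\geq 0$, while the normalization yields $|\sum_{i}\rho_{i}(p)w_{i}|^{2}=|\beta|^{2}$.

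The main obstacle, and the geometric heart of the argument, is the refined comparison of the Hessian at $z$ with the Hessian at $p$ on tangential directions,
\begin{equation*}
\sum_{i,j}\rho_{i\bar j}(z)\,w^{T}_{i}\bar w^{T}_{j}=\sum_{i,j}\rho_{i\bar j}(p)\,w^{T}_{i}\bar w^{T}_{j}+d(z)\,Q_{p}(w^{T})+O(d(z)^{2})|w^{T}|^{2},
\end{equation*}
with $Q_{p}$ a \emph{nonnegative} Hermitian form. This uses the defining property $|\nabla\rho|\equiv 1$ of the signed distance rather than a generic Taylor estimate: the level sets of $\rho$ form a family of parallel hypersurfaces, and Foote's formula represents the real Hessian of $\rho$ restricted to tangent directions to a level set as $A(I+\rho A)^{-1}=A+\rho A^{2}+\cdots$, where $A$ is the Weingarten operator of $\pa\Om$ at $p$. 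The $\rho$-linear coefficient $A^{2}$ is manifestly positive semidefinite, and this sign survives the symmetrization $X\mapsto\tfrac12(D^{2}\rho(X,X)+D^{2}\rho(JX,JX))$ that converts the real Hessian into the complex one. Without this positivity, a generic remainder $\rho_{i\bar j}(z)=\rho_{i\bar j}(p)+O(d)$ could defeat the Levi form in directions where it degenerates, leaving the main inequality out of reach.

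Assembling the pieces, I expand $\sum\rho_{i\bar j}(z)w_{i}\bar w_{j}$ and $|\sum\rho_{i}(z)w_{i}|^{2}$ under $w=\beta N+w^{T}$. Three groups of terms appear: a manifestly nonnegative main part $d(z)\bigl(\sum\rho_{i\bar j}(p)w^{T}_{i}\bar w^{T}_{j}+Q_{p}(w^{T})\bigr)+|\beta|^{2}$; mixed terms of size $O(d(z))|\beta|\,|w^{T}|$ and $O(d(z))|\beta|^{2}$; and quadratic errors $O(d(z)^{2})|w|^{2}$. Cauchy--Schwarz with a small parameter absorbs the mixed contributions into the leading $|\beta|^{2}$ and into $d(z)Q_{p}(w^{T})$, while shrinking $U$ so that $d(z)$ is sufficiently small suppresses the quadratic error. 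This produces a neighborhood of $\pa\Om$ on which $-\log d_{\pa\Om}$ is plurisubharmonic inside $\Om$, as claimed.
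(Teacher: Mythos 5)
The overall strategy you use — decompose $w$ into complex normal and complex tangent parts, use the exact Weingarten/Foote description of the real Hessian of the signed distance along the parallel hypersurfaces, and pass to the Levi form via the symmetrization $X\mapsto\tfrac12(D^2\rho(X,X)+D^2\rho(JX,JX))$ — is the same strategy as the paper's Step 2, which works in Gilbarg--Trudinger principal coordinates (Lemma \ref{GiTr}) instead of invoking Foote, but these are the same formula. Where your proposal genuinely diverges is in the endgame, and that is where it has a gap.

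You organize the inequality as a nonnegative main part plus mixed terms of size $O(d)|\beta||w^T|$ plus quadratic errors $O(d^2)|w|^2$, and you propose to kill the mixed and error terms by Cauchy--Schwarz against $|\beta|^2$ and against $d(z)Q_p(w^T)$, then shrinking $U$. This fails precisely in the directions that matter most. When $w^T$ is a Levi-null direction at $p$ (a degenerate pseudoconvex boundary has such directions), both $\mathcal{L}(\rho)(p)(w^T,w^T)$ and $Q_p(w^T)=\|A\,w^T\|$-type terms vanish, so the only available nonnegative reserve is $|\beta|^2$. Cauchy--Schwarz then costs you roughly $Cd^2|w^T|^2$, and you have a further $O(d^2)|w^T|^2$ remainder on top of that, with nothing of comparable size to absorb them: shrinking $U$ makes $d$ small but $|w^T|/|\beta|$ can be arbitrarily large, so no choice of $U$ gives a uniform inequality. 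In short, "absorb into $d Q_p$" is not available exactly where the hypothesis is tight.

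The paper sidesteps this entirely by never introducing any remainder. Using the exact Hessian $\tfrac{-\kappa_i}{1-d\kappa_i}\delta_{ij}$ (which is Foote's $A(I+\rho A)^{-1}$ in disguise; note in passing that your Neumann-series expansion should read $A-\rho A^2+\cdots$, not $A+\rho A^2+\cdots$), it completes the square termwise against the $\alpha^2/d^2$ contribution of the gradient term, yielding
\[
A \;\ge\; \sum_s \frac{(w_s^2+\omega_s^2)\,\kappa_s}{d\,(1-d\kappa_s)}\;-\;\sum_s\frac{w_s^2\kappa_s^2}{1-d\kappa_s},
\]
and then applies the exact elementary inequality $\frac{\kappa_s}{1-d\kappa_s}\ge \kappa_s$ (valid for either sign of $\kappa_s$ as long as $1-d\kappa_s>0$) to land exactly on $\tfrac{4}{d}\,\mathcal{L}(\text{defining function})(V,V)\ge 0$. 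There is no $O(d^2)$ slack to lose, so the inequality holds with equality along Levi-null directions instead of failing there. If you want to repair your argument, drop the Taylor expansion: keep Foote's formula exact, observe $A(I-dA)^{-1}-A = dA(I-dA)^{-1}A\ge 0$ exactly, and handle the normal--tangential cross terms by an exact completion of squares as above rather than by $\varepsilon$-Cauchy--Schwarz with remainders.
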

Of course, the main interest towards Oka's lemma, just as in Theorem \ref{smooth}, is close to the boundary as the {\it signed distance function}
\begin{equation}\label{sdf}
	\delta(z)=\begin{cases}-d_{\pa\Om}(z),\ \ z\in\Om;\\
		d_{\pa\Om}(z),\ \ z\in\C\setminus\Om 
		\end{cases}
\end{equation}
is, still for a smoothly bounded domain $\Om$, a {\it defining function} with excellent analytic properties. It is nevertheless interesting to obtain a direct proof covering the plurisubharmonicity in the {\it whole} $\Om$ rather than just close to the boundary. It should be pointed out that, contrary to \cite{HM}, non-smooth analysis has to be applied in such a project as the function $-\log(d_{\pa\Om}(z))$ is never smooth for a bounded $\Om$.

The aim of this note is to provide a direct proof of the full version of Oka's lemma, that is the statement:

\begin{thm}[Oka]\label{oka}
	Let $\Om\subsetneq\C$ be a domain. Then, $\Omega$ is pseudoconvex if and only if the function $-\log(d_{\pa\Om}(z))$ is plurisubharmonic on the whole domain $\Om$.	
\end{thm}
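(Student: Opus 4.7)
The reverse direction is immediate: if $u := -\log d_{\partial\Omega}$ is plurisubharmonic on $\Omega$, then $u$ is a continuous psh exhaustion tending to $+\infty$ at $\partial\Omega$, so $\Omega$ is pseudoconvex. For the non-trivial direction, my plan is to reduce to the smoothly bounded case, leverage Theorem~\ref{smooth}, and pass to the limit. First I would exhaust $\Omega$ by an increasing sequence of smoothly bounded, relatively compact, strictly pseudoconvex subdomains $\Omega_j \Subset \Omega_{j+1}$ with $\bigcup_j \Omega_j = \Omega$ (available because $\Omega$ is pseudoconvex). Since $\bar\Omega_j \subset \Omega_{j+1}$, the boundary $\partial\Omega_{j+1}$ lies outside $\bar\Omega_j$, so $d_{\partial\Omega_j}(z) \leq d_{\partial\Omega_{j+1}}(z)$ and $d_{\partial\Omega_j}(z) \uparrow d_{\partial\Omega}(z)$ for every $z \in \Omega$. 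Consequently $-\log d_{\partial\Omega_j} \downarrow -\log d_{\partial\Omega}$; since the decreasing limit of psh functions is psh, it suffices to prove the statement for each smoothly bounded strictly pseudoconvex $\Omega_j$.

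Fix such a $D = \Omega_j$ and set $u = -\log d_{\partial D}$. By Theorem~\ref{smooth}, $u$ is psh on some neighborhood of $\partial D$ containing $\{d_{\partial D} < \delta\}$ for some $\delta > 0$. For any constant $C > -\log\delta$, the function $v_C := \max(u, C)$ is psh on all of $D$: it equals $C$ where $u < C$; it equals $u$ on $\{u > C\} \subset \{d_{\partial D} < \delta\}$ where $u$ is psh; and on the switching set $\{u = C\}$, also inside $\{d_{\partial D} < \delta\}$, the pointwise maximum of two locally psh functions is psh. Letting $C \searrow -\log\delta$, the decreasing limit shows that $-\log\min(d_{\partial D}, \delta)$ is psh on $D$. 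To finish I would then bootstrap the threshold $\delta$ upward, exploiting the smoothness of $\partial D$, Levi pseudoconvexity, and the curvature estimates (\emph{mean convexity} and \emph{principal curvatures}, flagged in the keywords) available on the smooth locus of $d_{\partial D}$, until $\delta$ exceeds the inradius of $D$, at which point $\min(d_{\partial D}, \delta) = d_{\partial D}$ identically.

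This bootstrap is the main obstacle. Theorem~\ref{smooth} provides pshness only in a (possibly thin) tubular neighborhood of $\partial D$, while the truncated $-\log\min(d_{\partial D}, \delta)$ is simply constant on the interior core $\{d_{\partial D} \geq \delta\}$, which is generally not pluripolar; standard removable-singularity results for psh functions therefore do not apply directly. Overcoming this requires a geometric analysis of the complex Hessian of $d_{\partial D}$ on its smooth locus, together with a careful treatment of the cut locus of $\partial D$ where $d_{\partial D}$ fails to be smooth. That technical package is presumably where the ``new geometric property of pseudoconvex domains'' promised in the abstract enters, and it is where I would expect the bulk of the work in a fully direct proof to sit.
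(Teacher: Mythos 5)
Your reduction step is correct and matches the paper's: exhaust $\Omega$ by smoothly bounded strictly pseudoconvex $\Omega_j$, use the pointwise increase $d_{\partial\Omega_j}\uparrow d_{\partial\Omega}$, and pass to the decreasing limit of psh functions. The reverse direction is also fine. The problem is the ``bootstrap,'' which you flag as the main obstacle and then leave entirely unproved; as written, the proposal has a genuine gap there, and moreover the truncation mechanism you set up does not point toward the actual resolution.

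Truncating by $\max(u,C)$ flattens $u$ on the core $\{d_{\partial D}\geq\delta\}$ and destroys all information there; nothing about raising $\delta$ ``past the inradius'' can be salvaged from this, since once you truncate you can never learn whether the lost part of $u$ was psh. The paper's route is different in three essential ways, none of which appears in the proposal. First, one does not treat Theorem~\ref{smooth} as a black box that only gives pshness in a thin collar: the Gilbarg--Trudinger formula for the real Hessian of $d_{\partial D}$, combined with the Levi pseudoconvexity computation, shows directly that $-\log d_{\partial D}$ is psh on the \emph{entire} smooth locus $D\setminus\Sigma$, where $\Sigma$ is the cut locus. Second, one needs the fact (Crasta--Malusa, Lemma~\ref{CrMa}) that for a $C^2$-bounded domain $\Sigma$ has Lebesgue measure zero -- a nontrivial input that makes the bad set small in the right sense. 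Third, and this is the crux, one needs a removable-singularity theorem across a closed Lebesgue-null set (Theorem~\ref{didi}): a function that is \emph{subharmonic} on all of $D$ and psh on $D\setminus\Sigma$ is psh on $D$. To apply it one must first prove subharmonicity of $-\log d_{\partial D}$ on all of $D$, which the paper does by writing $-\log d_{\partial D}(x)=\sup_{w\in\partial D}(-\log\|x-w\|)$, bounding the generalized Laplacian of each term from below, and invoking the Privalov criterion. Your closing paragraph gestures in the right direction (``geometric analysis of the complex Hessian on the smooth locus,'' ``careful treatment of the cut locus''), but the specific device that closes the argument -- subharmonic everywhere plus psh off a null set implies psh everywhere -- is the heart of the paper's proof and is missing from the proposal; without it (or a genuine substitute) the argument does not go through.
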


This extends the result of Herbig and McNeal. In fact, we show that the result is a simple consequence of the modern development of the theory of the distance function, suitably coupled with a recent removal of singularities theorem shown in \cite{DD}. Additionally, we hope to advertise the mentioned developments which do not seem to be well-known in the complex-analytic community.

The following result is undoubtedly known, yet we could not find a reference in the literature. We provide a proof, parallel to the proof of the main theorem, just to illustrate the method.
\begin{thm}\label{sh} Let $\Omega\neq\mathbb R^{m}$ be any domain in $\mathbb R^{m}$. Then,  the function $-\log(d_{\pa\Om}(x))$ if $m=1,2$ and $(d_{\pa\Om}(x))^{2-m}$ if $m>2$ is subharmonic on $\Omega$. Moreover, if $\Omega$ is smoothly bounded and $m>2$ then $-\log(d_{\pa\Om}(x))$ is subharmonic in $\Omega$ if the mean curvature  $H$ of the boundary satisfies  $H\geq \frac{-1}{(m-2) R}$ where $R$ is the inradius of $\Omega$, that is, the radius of the largest ball contained in $\Omega$. The function $-\log(d_{\pa\Om}(x))$ is subharmonic near the boundary for any smoothly bounded $\Omega$. 
\end{thm}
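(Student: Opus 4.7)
We address the three assertions in turn, and then indicate the main difficulty.

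For a general domain $\Omega\neq\R^m$, the identity $d_{\partial\Omega}(x) = \inf_{y\in\partial\Omega}|x-y|$ rewrites, by monotonicity of $r\mapsto r^{2-m}$ and of $r\mapsto -\log r$, as
\[
 d_{\partial\Omega}(x)^{2-m} = \sup_{y\in\partial\Omega}|x-y|^{2-m}\quad (m\geq 3), \qquad -\log d_{\partial\Omega}(x)=\sup_{y\in\partial\Omega}\bigl(-\log|x-y|\bigr)\quad (m\leq 2).
\]
From the formula $\Delta|x|^\alpha = \alpha(\alpha+m-2)|x|^{\alpha-2}$ one reads that for each fixed $y\in\Omega^c$ the function $x\mapsto|x-y|^{2-m}$ is harmonic on $\Omega$ when $m\geq 3$, $x\mapsto -\log|x-y|$ is harmonic on $\Omega$ when $m=2$, and $x\mapsto -\log|x-y|$ is convex hence subharmonic on $\Omega$ when $m=1$. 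Continuity of $d_{\partial\Omega}$ makes the supremum upper semicontinuous, so by the standard result on suprema of subharmonic functions it is itself subharmonic on $\Omega$.

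For the smooth case with $m\geq 3$, $d = d_{\partial\Omega}$ is $C^\infty$ on $\Omega\setminus\Sigma$, where $\Sigma$ denotes the cut locus of $\partial\Omega$; there $|\nabla d|=1$ and
\[
 \Delta d(x) = -\sum_{i=1}^{m-1}\frac{\kappa_i(\pi(x))}{1 - \kappa_i(\pi(x))\,d(x)},
\]
with $\pi(x)\in\partial\Omega$ the nearest boundary point and $\kappa_1,\dots,\kappa_{m-1}$ the principal curvatures at $\pi(x)$ with respect to the outward unit normal. The identity $\Delta(-\log d) = -\Delta d/d + 1/d^2$ reduces the subharmonicity of $-\log d$ to $\Delta d\leq 1/d$. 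The function $\kappa\mapsto\kappa/(1-\kappa t)$ is convex on $(-\infty,1/t)$ (its second derivative is $2t/(1-\kappa t)^3>0$), so Jensen's inequality applied to the formula above yields
\[
 \Delta d(x)\leq -\frac{(m-1)H(\pi(x))}{1-H(\pi(x))\,d(x)},
\]
and a short algebraic manipulation shows this upper bound is at most $1/d(x)$ precisely when $H(\pi(x))\geq -1/((m-2)d(x))$. Since $d\leq R$ throughout $\Omega$, the global assumption $H\geq -1/((m-2)R)$ is sufficient and yields $\Delta(-\log d)\geq 0$ on $\Omega\setminus\Sigma$. To propagate subharmonicity across the closed null set $\Sigma$ one invokes the removal-of-singularities theorem of \cite{DD}, applicable because $-\log d$ is continuous on $\Omega$.

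The third assertion is immediate from the same pointwise inequality: on the smooth one-sided neighborhood of $\partial\Omega$ inside $\Omega$ (contained in $\Omega\setminus\Sigma$), for $d(x)$ small enough the value $-1/((m-2)d(x))$ falls below $\min_{\partial\Omega}H$, which is finite by compactness and smoothness of $\partial\Omega$. Hence the sufficient condition $H(\pi(x))\geq -1/((m-2)d(x))$ is automatic near $\partial\Omega$, and no removability argument is needed. The principal non-elementary step in the whole proof is therefore the extension across the cut locus $\Sigma$ in the second part: the clean sup-of-subharmonics device from the first part is unavailable when $m\geq 3$, because $-\log|x-y|$ is \emph{super}harmonic, not subharmonic, on $\R^m\setminus\{y\}$ for such $m$.
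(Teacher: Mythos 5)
Parts (1) and (3) of your argument are essentially the paper's: the sup-of-harmonics representation for $d^{2-m}$ (and $-\log d$ when $m\leq 2$), and the Jensen/mean-curvature computation on $\Omega\setminus\Sigma$ for $-\log d$ when $m>2$. (A small slip: the formula $\Delta d=-\sum\kappa_i/(1-\kappa_i d)$ you use is the one for principal curvatures taken with respect to the \emph{inward} normal, matching the paper's convention in Lemma \ref{GiTr} and the sign needed for $H\geq 0$ on convex domains.)

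The genuine gap is in how you cross the cut locus in part (2). Theorem \ref{didi} from \cite{DD} \emph{assumes} subharmonicity on all of $\Omega$ as a hypothesis and upgrades plurisubharmonicity off a closed null set to plurisubharmonicity everywhere; it is not a removability statement for subharmonicity and cannot be used to produce subharmonicity that you have not yet established. Worse, the substitute you tacitly use --- that a function continuous on $\Omega$ and subharmonic on the complement of a closed Lebesgue-null set is subharmonic on $\Omega$ --- is simply false: $u(x_1,\ldots,x_m)=-|x_1|$ is continuous, harmonic off the closed null set $\{x_1=0\}$, but not subharmonic across it. What actually makes the extension work in the paper is the quantitative lower bound on the generalized Laplacian coming from the sup representation: $\Delta(-\log|x-w|)=-(m-2)/|x-w|^2\geq -(m-2)/d_{\partial\Omega}(x)^2$ for every $w\in\partial\Omega$, which by Lemma \ref{supf} transfers to $\overline{\Delta}(-\log d_{\partial\Omega})\geq -(m-2)/d_{\partial\Omega}^2>-\infty$ throughout $\Omega$. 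Combining this with $\overline{\Delta}(-\log d_{\partial\Omega})\geq 0$ on $\Omega\setminus\Sigma$ (your part (2) computation) and the fact that $\Sigma$ has zero Lebesgue measure (Lemma \ref{CrMa}), Privalov's theorem gives subharmonicity on all of $\Omega$. You derived exactly this kind of estimate in part (1), but you did not carry it into part (2), and it is precisely the missing ingredient. Your sup-of-subharmonics device is indeed unavailable for $-\log d$ with $m\geq 3$, as you observe; the fix is the $\overline{\Delta}$ estimate plus Privalov, not removability of the cut locus.
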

For $m=1$ this boils down to the trivial fact that for $a<x<b$ the function $\max\{-\log(x-a),-\log(b-x)\}$ is convex. For $m=2$ this result reflects the well-known fact that every planar domain is pseudoconvex.

For general domains $\Omega\subsetneq \mathbb R^{m},\, m>2$ the function $-\log(d_{\pa\Om}(x))$ may fail to be subharmonic in the whole $\Omega$. For example, if $r$ is small (it is enough to assume $0<r<\frac{m-2}{m}$) and $\Omega$ is the annular region
$$\Omega:=\{ x\in \mathbb R^{m} |\quad r<\Vert x\Vert <1\}=B(0,1)\setminus \overline{B(0,r)}$$ then $d_{\pa\Om}(x)=\Vert x\Vert -r$ for all $x$ such that  $r<\Vert x\Vert<\frac{1+r}{2}$. One can check that $$\Delta (-\log(\Vert x\Vert -r))=\frac{(m-1)r-(m-2)\Vert x\Vert}{\Vert x\Vert (\Vert x\Vert -r)^2}$$ which is negative for all $x$ such that $\frac{m-1}{m-2}r<\Vert x\Vert<\frac{1+r}{2}$. In particular, as $B(0,1)\setminus \overline{B(0,r)}= \{x \quad |\quad \Vert x \Vert ^d + (1+r-\Vert x \Vert)^d< 1+r^d\}\text{ for any } d<2-m$, the subharmonicity may fail for domains which are smooth sublevel sets of smooth strongly subharmonic functions, see also Remark \ref{shsublevel}.


For pseudoconvex domains the inequality $H\geq \frac{-1}{(m-2) R}$ may also fail (again, the planar domain $B(0,1)\setminus\overline{B(0,r)}$ is an example) but  $-\log(d_{\pa\Om}(x))$ is still subharmonic as a consequence of Oka's lemma. We provide a direct proof of this, not appealing to the plurisubharmonicity of $-\log(d_{\pa\Om}(x))$. This follows from a geometric property of pseudoconvex domains which is of independent interest and which, we believe, has not been observed so far:
\begin{thm}\label{geometric property} Let $\Om\subset\C$ be a bounded pseudoconvex domain with $C^2$ smooth boundary. Then at any point $w\in\pa\Omega$ one can choose $2n-2$ out of the $2n-1$ principal curvatures of $\pa\Om$ so that their sum
	$$\kappa_1+\cdots+\kappa_{j-1}+\kappa_{j+1}+\cdots+\kappa_{2n-1}$$
	is non-negative. The sum is positive unless $\pa \Om$ is Levi flat at $w$.  Moreover, at least $n-1$ of the principal curvatures are non-negative (positive if $\pa\Om$ is strongly pseudoconvex at $w$).
\end{thm}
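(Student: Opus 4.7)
My plan is to pass from the $(2n-1)$-dimensional real tangent space $T_w\pa\Om$ to its $J$-invariant complex tangent subspace $\cH_w := T_w\pa\Om \cap J\,T_w\pa\Om$, which has real codimension one in $T_w\pa\Om$ (it is the orthogonal complement of $JN$, where $N$ is the outward unit normal). Working with the signed distance $\delta$ (negative in $\Om$), which is $C^2$ near $\pa\Om$ with $|\nabla\delta|=1$ and hence satisfies $D^2\delta(\nabla\delta,\,\cdot\,)=0$, the restriction $D^2\delta|_{T_w\pa\Om}$ is the shape operator of $\pa\Om$ whose eigenvalues are the principal curvatures $\kappa_1\le\ldots\le\kappa_{2n-1}$. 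Writing $\mu_1\le\ldots\le\mu_{2n-2}$ for the eigenvalues of the further restriction $D^2\delta|_{\cH_w}$, Cauchy interlacing (valid for any codimension-one restriction of a symmetric form) gives
\[
\kappa_j \le \mu_j \le \kappa_{j+1}, \qquad j=1,\ldots,2n-2.
\]

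For the first assertion I would compute $\sum_j \mu_j = \tr(D^2\delta|_{\cH_w})$ in a $J$-adapted orthonormal basis $X_1, JX_1,\ldots,X_{n-1},JX_{n-1}$ of $\cH_w$: each pair contributes $D^2\delta(X_i,X_i)+D^2\delta(JX_i,JX_i)$, which up to a fixed positive constant is the Levi form on the $(1,0)$-vector attached to $X_i$ and is therefore non-negative by pseudoconvexity. Summing, $\sum_j\mu_j\ge 0$, with equality precisely when the Levi form vanishes identically at $w$, i.e., $\pa\Om$ is Levi flat at $w$. Combining with the interlacing inequality,
\[
\sum_{j=2}^{2n-1}\kappa_j \;\ge\; \sum_{j=1}^{2n-2}\mu_j \;\ge\; 0,
\]
so dropping the smallest curvature $\kappa_1$ produces the required non-negative sum, strict unless $\pa\Om$ is Levi flat at $w$.

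For the second assertion it is enough to show $\mu_n\ge 0$: by interlacing $\kappa_{n+1}\ge\mu_n\ge 0$, and then $\kappa_{n+1},\ldots,\kappa_{2n-1}$ are the desired $n-1$ non-negative principal curvatures. If instead $\mu_n<0$, let $V\subset\cH_w$ be the real span of the eigenvectors for $\mu_1,\ldots,\mu_n$, so $\dim_\R V = n$ and $D^2\delta$ is negative definite on $V$. The subspace $JV\subset\cH_w$ has the same real dimension $n$; since $\dim_\R\cH_w = 2n-2 < 2n$, the two subspaces must meet non-trivially. Pick $X\in V\cap JV$ with $X\ne 0$, so both $X\in V$ and $JX\in V$; then $D^2\delta(X,X)<0$ and $D^2\delta(JX,JX)<0$, making the Levi form negative at $X$, contradicting pseudoconvexity. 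For strong pseudoconvexity at $w$ the same dimension argument applied to the assumption $\mu_n\le 0$ produces a non-zero $X$ with $D^2\delta(X,X)+D^2\delta(JX,JX)\le 0$, contradicting strict positivity of the Levi form; hence $\mu_n>0$ and $\kappa_{n+1}>0$.

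The two creative inputs are the identification of $\tr(D^2\delta|_{\cH_w})$ with the trace of the Levi form and the dimension count forcing $V\cap JV\ne\{0\}$; once these are in place, Cauchy interlacing mechanically yields both conclusions. I anticipate no serious technical obstacle beyond careful bookkeeping of sign conventions for the signed distance, the outward normal, and the normalization of the Levi form.
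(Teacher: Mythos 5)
Your proof is correct, and it is genuinely different from the paper's. The paper works with the doubled $(4n-2)\times(4n-2)$ block matrix $\begin{pmatrix}K & 0\\ 0 & K\end{pmatrix}$, proves that its compression to the real span of $\{(e_i,Je_i),(Je_i,e_i)\}$ has non-negative diagonal, and then invokes the Ky Fan maximum principle for the first assertion and a Krein-space (signature-counting) argument for the second. You instead restrict $D^2\delta$ once, from $T_w\pa\Omega$ to the $J$-invariant hyperplane $\cH_w$, and observe two things: that the trace of this restriction is (a positive multiple of) the trace of the Levi form, hence $\ge 0$ and $>0$ off the Levi-flat locus; and that Cauchy interlacing of the eigenvalues $\mu_j$ of the compression with the principal curvatures $\kappa_j$ converts $\sum_j\mu_j\ge 0$ into $\sum_{j\ge 2}\kappa_j\ge 0$ and converts $\mu_n\ge 0$ into $\kappa_{n+1}\ge 0$. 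The second assertion then follows from the transparent dimension count $\dim V+\dim JV=2n>2n-2=\dim\cH_w$, which forces $V\cap JV\neq\{0\}$ and yields a direction on which the Levi form would be non-positive, contradicting (strong) pseudoconvexity. Your route avoids the block-doubling entirely, replaces Ky Fan by the more standard Cauchy interlacing, and replaces the Krein-space step by the elementary intersection argument; it also makes the ``positive unless Levi flat'' refinement immediate, since $\sum_j\mu_j=0$ iff the non-negative Levi form has zero trace iff it vanishes. The only sign-convention point worth flagging explicitly when writing this up is that with the inward normal $\nu=-\nabla\delta$ the shape operator is $D^2\delta|_{T_w\pa\Omega}$, so that on a ball the $\kappa_i$ are positive, matching the paper's normalization in Lemma~\ref{GiTr}; once that is fixed everything else is bookkeeping.
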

We remark that on smoothly bounded pseudoconvex domains it is not possible to bound from below and hence to control the sum of all of the $2n-1$ principal curvatures, nor a particular principal curvature, as demonstrated by the above example.

It is known that a stronger assumption than in Theorem \ref{sh}, namely that $-d_{\pa\Om}(x)$ is subharmonic outside a prescribed compact singular subset of $\Omega$, called the cut locus $\Sigma$ below, is equivalent to the so-called {\it mean convexity} of a smoothly bounded domain $\Omega$, see \cite{LLL}. This equivalence is utilized in \cite{LLL} to characterize the domains with $C^2$ boundary on which the Hardy's inequality holds with optimal constant. The mean convexity of a domain is the property that the mean curvature of the boundary $\partial \Omega$, considered as a hypersurface, is non-negative at each boundary point. For more on the notion of mean convexity, its significance in geometry and applications see \cite{BEL,Ma} and for a introductory level exposition see \cite{G}.

  We observe that the same argument as in the proof of our main theorem allows one to restate the equivalence theorem of \cite{LLL} in a more transparent way:
  \begin{thm}\label{meanconvex} A smoothly bounded domain $\Omega\subset \mathbb R^{m}$ is mean convex if and only if $-d_{\pa\Om}(x)$ is subharmonic throughout $\Omega$.
  	\end{thm}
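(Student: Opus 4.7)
The plan is to mimic the proof the paper gives for its main theorem, with $-d_{\pa\Om}$ in place of $-\log d_{\pa\Om}$. Let $\Sigma\subset\Om$ denote the cut locus, i.e.\ the closure of the set of points of $\Om$ having more than one nearest boundary point. Then $\Sigma$ is closed, of Lebesgue measure zero, and $d_{\pa\Om}$ is smooth on $\Om\setminus\Sigma$. For $x\in\Om\setminus\Sigma$ let $\pi(x)\in\pa\Om$ be its unique nearest boundary point and $\kappa_1,\dots,\kappa_{m-1}$ the principal curvatures of $\pa\Om$ at $\pi(x)$ taken with respect to the outward unit normal. Along the inner normal one has the classical identity
\[
\Delta d_{\pa\Om}(x)\,=\,-\sum_{i=1}^{m-1}\frac{\kappa_i(\pi(x))}{1-d_{\pa\Om}(x)\,\kappa_i(\pi(x))},
\]
and every denominator is strictly positive on $\Om\setminus\Sigma$, the focal locus being contained in $\Sigma$.

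For the direction $(\Rightarrow)$, each map $t\mapsto \kappa_i/(1-t\kappa_i)$ has derivative $\kappa_i^2/(1-t\kappa_i)^2\geq 0$ on its interval of definition, so
\[
\sum_{i=1}^{m-1}\frac{\kappa_i}{1-d_{\pa\Om}\,\kappa_i}\;\geq\;\sum_{i=1}^{m-1}\kappa_i\,=\,(m-1)\,H(\pi(x)).
\]
Mean convexity therefore forces $\Delta d_{\pa\Om}\leq 0$ pointwise on $\Om\setminus\Sigma$. To upgrade this to a distributional inequality on the whole of $\Om$, I would exploit the local semiconcavity of $d_{\pa\Om}$: globally $d_{\pa\Om}^2=|x|^2+\inf_{y\in\pa\Om}(|y|^2-2x\cdot y)$ is $2$-semiconcave, and taking the square root preserves semiconcavity on compact subsets of $\Om$. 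For a semiconcave function the distributional Laplacian splits as its a.e.\ classical Laplacian plus a non-positive singular measure, the singular part coming entirely from the concave summand in a semiconcave decomposition. Together with the pointwise bound on $\Om\setminus\Sigma$ this gives $\Delta d_{\pa\Om}\leq 0$ as a distribution on $\Om$, i.e.\ $-d_{\pa\Om}$ is subharmonic. Alternatively the conclusion across $\Sigma$ can be extracted from the removal of singularities theorem of \cite{DD}, which is precisely how the analogous step is handled in the paper's proof of Oka's lemma.

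The converse $(\Leftarrow)$ is immediate: $d_{\pa\Om}$ is smooth in a one-sided tubular neighborhood of $\pa\Om$, so subharmonicity of $-d_{\pa\Om}$ there forces $\sum_i \kappa_i/(1-d_{\pa\Om}\kappa_i)\geq 0$, and letting $x\to y\in\pa\Om$ along the inner normal yields $(m-1)H(y)\geq 0$. The only real obstacle is the direction $(\Rightarrow)$: the formula for $\Delta d_{\pa\Om}$ is only classical off $\Sigma$, and $\Sigma$ can be geometrically wild, but the key structural observation is that the singularity of $d_{\pa\Om}$ along $\Sigma$ contributes only a non-positive distributional term to $\Delta d_{\pa\Om}$, so it can never destroy the subharmonicity of $-d_{\pa\Om}$.
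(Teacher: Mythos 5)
Your main line, via local semiconcavity of $d_{\pa\Om}$, is correct and is a genuinely different route from the paper's. The paper instead writes $-d_{\pa\Om}(x)=\sup_{w\in\pa\Om}(-\|x-w\|)$, observes that each member of this envelope satisfies $\Delta(-\|x-w\|)=-(m-1)/\|x-w\|\geq -(m-1)/d_{\pa\Om}(x)$, and uses Lemma \ref{supf} on regularized suprema to get $\overline{\Delta}(-d_{\pa\Om})\geq -(m-1)/d_{\pa\Om}>-\infty$ everywhere in $\Om$. Combined with $\overline{\Delta}(-d_{\pa\Om})\geq 0$ on $\Om\setminus\Sigma$ (which has full measure by Lemma \ref{CrMa}), Privalov's theorem then gives subharmonicity on all of $\Om$. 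Both arguments rest on the same heuristic --- the singularity along $\Sigma$ can only push $\Delta d_{\pa\Om}$ down, never up --- but the paper's route is uniform with the rest of the article and stays entirely inside potential theory, whereas yours needs Alexandrov's structure theorem (a.e.\ second-order differentiability plus the measure-theoretic splitting of $D^2$ of a concave function into absolutely continuous and non-positive singular parts) to make precise your sentence ``the distributional Laplacian splits as its a.e.\ classical Laplacian plus a non-positive singular measure.'' That reference should be supplied if you go this way, but the argument is sound.

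Your parenthetical alternative --- that the extension across $\Sigma$ ``can be extracted from the removal of singularities theorem of \cite{DD}, which is precisely how the analogous step is handled in the paper's proof of Oka's lemma'' --- is wrong on both counts and should be deleted. Theorem \ref{didi} from \cite{DD} takes a function that is \emph{already} subharmonic on all of $\Om$ and merely plurisubharmonic off a closed null set, and upgrades it to plurisubharmonic on all of $\Om$; it does not extend subharmonicity itself past a null set. Such an unconditional extension is false: $-|x_1|$ on $\R^m$ is continuous and harmonic off the closed null set $\{x_1=0\}$ but is superharmonic, not subharmonic, on $\R^m$. In the paper's Step 3 the subharmonic extension past $\Sigma$ is handled by Privalov's theorem alone (fed by the envelope bound above); Theorem \ref{didi} enters only afterwards, to upgrade the already-globally-subharmonic function $-\log d_{\pa\Om}$ to plurisubharmonic. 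For the purely real Theorem \ref{meanconvex} only the Privalov step is needed, and \cite{DD} plays no role.
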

  
 It follows that, by using Theorem \ref{meanconvex}, one can meaningfully define the mean convexity of not necessarily smooth domains by the property that $-d_{\pa\Om}(x)$ is subharmonic or (which is equivalent) that $\Omega$ may be exhausted by mean convex domains with smooth boundary. 
 
{\bf Acknowledgment} The first author is indebted to professor Franc Forstneri\v{c} for helpful discussions on \cite{Fo}.
	\section{Preliminaries}
	
	In this section we gather the notions and basic results used in our proof of Theorem \ref{oka}.
	
	{\bf 1. Complex analytic notions.} Below we recall the main notions related to complex convexity that will be utilized later on. For more details we refer to \cite{GR} or \cite{K}.
	\begin{defi} A domain $\Om\subset\C$ is said to be a {\bf domain of holomorphy} 
		if the following holds: there are no open sets $\Om_1,\Om_2$ such that $\Om_2$ is connected, $\Omega_2\not\subset\Omega$, $\varnothing \neq\Om_1\subset\Om_2\cap\Om$ and for any holomorphic function $u$ on $\Om$ there is a holomorphic function $u_2$ on $\Om_2$, such that $u=u_2$ on $\Om_1$.
		\end{defi}
	
	It is a classical result in complex analysis (see \cite{K}) that the above definition is equivalent to {\it holomorphic convexity}:
	
		\begin{defi} A domain $\Om\subset\C$ is said to be  {\bf holomorphically convex} if for any compact set $K\subset\Om$ its holomorphic hull
		$$\hat{K}:=\left\lbrace z\in \Om\ |\ \forall f\in\mathcal O(\Om)\ |f(z)|\leq \sup_K|f|\right \rbrace$$
		is also compact in $\Om$.
	\end{defi}

	If the boundary of $\Om$ is at least $C^2$ smooth the notion of Levi pseudoconvexity can be introduced:
	\begin{defi}
	A smoothly bounded domain $\Om\subset \C$ is {\bf Levi pseudoconvex} if for any $p\in\pa\Om$ and any local defining function $\rho$ of $\pa\Om$ near $p$ the inequality
	$$\Le(\rho(p))(X,X):=\sum_{j,k=1}^n\frac{\pa^2\rho}{\pa z_j\pa\bar{z}_k}(p)X_j\overline{X}_k\geq 0$$  
	holds for any complex tangent vector $X=\sum_{j=1}^nX_j\frac{\pa}{\pa z_j}\in T_{p}^{\mathbb C}\pa\Om$. The expression $\Le(\rho(p))(X,X)$ is the {\bf Levi form} of $\rho$ at the boundary point $p$ evaluated on the vector $X$.
	\end{defi}
It is a simple exercise to check that the inequality in the definition above does not depend on the choice of the local defining function. It is also invariant under a holomorphic change of variables near $p$.

A classical result in complex analysis (see \cite{GR}, Chapter IX, Section B) is that domains of holomorphy in $\C$ with $C^2$ smooth boundaries are Levi pseudoconvex.

{\bf 2. The distance function.} The distance function and its signed version are classical objects of study. Unfortunately, the basic facts of the associated theory are scattered in the literature - we refer to \cite{CM, E, Fe, Ha, KS, KP, LN1, LN2, MM} for a (necessarily incomplete) list of old and new contributions. As the distance function does not depend on the complex structure, we identify $\C$ with $\mathbb R^{m}$ for $m=2n$ in this section and  recall the relevant definitions and facts in the real setting:
\begin{defi}
Let  $\Om\subset\mathbb R^{m}, \Om\neq\mathbb R^{m}$  be a domain. The set of points $x\in\Om$ such that there is more than one point on $\pa\Om$ realizing the distance from $x$ to $\pa\Om$ is called the {\bf medial axis} of $\pa\Om$. We denote this set by $\Gamma$.	
\end{defi}
As $d_{\pa\Om}(x)$ is $1$-Lipschitz and differentiable at $\Om\setminus\Gamma$ (see \cite{Fe}, at such points one has $\nabla d_{\pa\Om}(x)=\frac{x-w}{\Vert x-w\Vert}$ with $w$ being the unique closest boundary point) it follows that $\Gamma$ is of Lebesgue measure zero. Much more is in fact true - P. Erd\"os in \cite{E} proved that $\Gamma$ is $(m-1)$-{\it rectifiable} (see also \cite{Ha} for a modern treatment). Topologically, $\Gamma$ need not be a closed set as the example of a planar ellipse shows.

\begin{defi}
The Euclidean closure of $\Gamma$ in $\Om$ is called the {\bf cut locus}. We denote the cut locus by $\Sigma$.
\end{defi}

A beautiful example of Mantegazza and Mennucci (see \cite{MM}) shows that $\Sigma$ can have positive Lebesgue measure even if $\pa\Om$ is $C^{1,1}$ smooth. The crucial fact we shall need is that this does not happen if we add a little to the boundary regularity:

\begin{lem}[Crasta-Malusa, \cite{CM}]\label{CrMa}
Let  $\Om\subset \mathbb R^{m}$ be a bounded domain with $C^2$-smooth boundary. Then $\Sigma$ is of zero Lebesgue measure. 
\end{lem}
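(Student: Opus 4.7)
The plan is to represent a one-sided tubular neighborhood of $\pa\Om$ inside $\Om$ by the normal parametrization $\Phi : \pa\Om \times [0,\infty) \to \mathbb R^{m}$, $\Phi(w,t) = w + t\nu(w)$, where $\nu(w)$ is the inward unit normal at $w$. Because $\pa\Om$ is $C^2$, the map $\Phi$ is $C^1$; its tangential differential at $(w,t)$ equals $I - tA(w)$, where $A(w)$ is the Weingarten (shape) operator of $\pa\Om$ at $w$, so its Jacobian determinant is $\det(I - tA(w))$. On $\Om \setminus \Gamma$ every point $x$ has a unique nearest boundary point $w(x)$ and satisfies $x = \Phi(w(x), d_{\pa\Om}(x))$.

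First I would dispose of $\Gamma$: it has Lebesgue measure zero by the Erd\H{o}s rectifiability statement cited in the excerpt, or alternatively by Rademacher's theorem, since $\Gamma$ is contained in the non-differentiability set of the Lipschitz function $d_{\pa\Om}$. The substantive step is to show that
\[
\Sigma \setminus \Gamma \;\subset\; \Phi(F^*), \qquad F^* := \{(w,t)\in\pa\Om\times(0,\infty) : \det(I-tA(w))=0\}.
\]
Indeed, take $x\in\Sigma\setminus\Gamma$ with unique nearest point $w(x)$, and pick $\Gamma \ni x_n \to x$; each $x_n$ admits nearest points $w_n^{(1)}\neq w_n^{(2)}$. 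Since $d(x_n,w_n^{(j)})\to d_{\pa\Om}(x)$ and $w(x)$ is unique, both sequences $w_n^{(j)}$ converge to $w(x)$. If $(w(x),d_{\pa\Om}(x)) \notin F^*$, then the inverse function theorem makes $\Phi$ injective on a neighborhood of $(w(x),d_{\pa\Om}(x))$, forcing $w_n^{(1)} = w_n^{(2)}$ for large $n$, a contradiction. This gives the desired inclusion.

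Next I would bound $\Phi(F^*)$. For each fixed $w$ the equation $\det(I-tA(w))=0$ is polynomial of degree at most $m-1$ in $t$, hence has at most $m-1$ positive roots which depend continuously on $w$ by the continuity of $A$. Thus $F^*$ is locally a finite union of graphs of continuous functions over $\pa\Om$, an $(m-1)$-dimensional subset of the $m$-dimensional product, and $\Phi(F^*)$ has zero $m$-dimensional Lebesgue measure by the locally Lipschitz character of $\Phi$. Combined with $|\Gamma|=0$ this gives $|\Sigma|=0$.

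The focal-trapping step is the crux and is exactly where the $C^2$ hypothesis enters: continuity of $A(w)$ is needed so that the inverse function theorem applies uniformly near a fixed parameter, and the finite multiplicity in $t$ of the zeros of $\det(I-tA(w))$ is what keeps the dimension of $F^*$ down to $m-1$. Under merely $C^{1,1}$ regularity the principal curvatures exist only almost everywhere and the Mantegazza--Mennucci example recalled in the excerpt shows that $\Sigma$ can have positive measure, so this method cannot be pushed below $C^2$.
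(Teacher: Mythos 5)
The paper does not contain a proof of Lemma \ref{CrMa}: it is invoked by citation to Crasta--Malusa, so there is no ``paper's own proof'' to compare against. Your proposal is a correct, self-contained derivation via the normal exponential parametrization and the focal locus, which is the standard Riemannian-geometric approach to such results, and it is also essentially the route that underlies the cited reference.

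Two small points of hygiene. First, the focal-trapping step is stated cleanly and correctly: the compactness of $\pa\Om$ together with uniqueness of the nearest point forces $w_n^{(1)},w_n^{(2)}\to w(x)$, and the inverse function theorem for the $C^1$ map $\Phi$ then yields local injectivity, so $(w(x),d_{\pa\Om}(x))$ must lie in $F^*$. Second, the phrase that the positive roots of $\det(I-tA(w))$ ``depend continuously on $w$'' is a slight overstatement as written (one must first order the eigenvalues of $A(w)$ to get continuous selections, and the domains where $\kappa_i(w)>0$ are only open sets); it is simpler and more robust to observe that $F^*$ is closed and each fiber $F^*_w$ is a finite set, so $\mathcal H^{m-1}\times\lambda^1(F^*)=0$ by Fubini, after which local Lipschitzness of $\Phi$ gives $\lambda^m(\Phi(F^*))=0$. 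Your remark on why the argument fails under $C^{1,1}$ regularity is exactly the right explanation of the role of the hypothesis and is consistent with the Mantegazza--Mennucci example recalled in the paper.
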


\begin{rem}
	If $C^3$ boundary regularity is assumed, Nirenberg and Li have proved that the $(m-1)$-dimensional Hausdorff measure of $\Sigma$ is finite, which is optimal (see \cite{LN2}). In our proof we can use either result.
\end{rem}
Under the $C^2$ boundary assumption, $\Sigma$ is also known to be a strong deformation retract of $\Omega$, see \cite{W}, which yields that $\Sigma$ encodes many topological properties of $\Omega$.

Our main interest towards $\Gamma$ and $\Sigma$ lies in the regularity properties of $d_{\pa\Om}$ in their complement. The following result is classical.

\begin{lem}[\cite{Fe},\cite{KP},\cite{LN1}] If $\pa \Om$ is $C^{1,1}$ smooth and bounded then $\Gamma$ is relatively compact in $\Om$.
	\end{lem}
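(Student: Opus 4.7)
The plan is to show that \emph{$C^{1,1}$ boundary regularity forces a uniform positive reach}, so that the medial axis stays bounded away from $\partial \Om$, and then combine this with boundedness of $\Om$ to get relative compactness in $\Om$.

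First I would establish the following quantitative statement: there exists $r_0>0$ such that every $x\in\Om$ with $d_{\pa\Om}(x)<r_0$ has a \emph{unique} closest point on $\pa\Om$. This is the standard "uniform interior sphere" / "tubular neighborhood" property of $C^{1,1}$ hypersurfaces. Concretely, since $\pa\Om$ is $C^{1,1}$, its Gauss map is locally Lipschitz, and a covering/compactness argument on the compact set $\pa\Om$ produces a uniform radius $r_0$ such that at every $w\in\pa\Om$ the open ball of radius $r_0$ centered at $w-r_0\,\nu(w)$ (where $\nu(w)$ is the outer unit normal) lies inside $\Om$ and touches $\pa\Om$ only at $w$. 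Equivalently, in Federer's language, $\pa\Om$ has positive reach; this is exactly the content of the classical references cited (\cite{Fe}, \cite{KP}, \cite{LN1}).

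Once this is in place, the nonuniqueness of the nearest boundary point for $x\in\Gamma$ forces $d_{\pa\Om}(x)\geq r_0$: if two distinct boundary points $w_1,w_2$ both realized the distance and that distance were $<r_0$, then the interior ball at $w_1$ would contain $w_2$ in its interior (or vice versa), contradicting $\Om\cap B(w_1-r_0\nu(w_1),r_0)\subset\Om$. Hence
\[
\Gamma\ \subset\ \{x\in\Om:\ d_{\pa\Om}(x)\geq r_0\}.
\]
Since $\Om$ is bounded, the right-hand side is a closed subset of $\mathbb{R}^m$ contained in $\Om$; therefore it is compact in $\mathbb{R}^m$ and in particular relatively compact in $\Om$, and the same holds for $\Gamma$.

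The main (really only) obstacle is the quantitative uniform interior sphere statement. It is essentially a consequence of the Lipschitz dependence of the unit normal on the base point, but one has to be careful that $C^{1,1}$ (rather than $C^2$) is enough: the reach is controlled by the Lipschitz constant of the Gauss map via an elementary projection argument, and this requires no second derivatives, only $C^{1,1}$ regularity. Beyond that, the argument is topological/metric and I would simply invoke the cited references for the sharp form if brevity is desired.
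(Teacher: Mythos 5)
Your argument is correct and is essentially the same route the paper takes (and attributes to \cite{Fe},\cite{KP},\cite{LN1}): the $C^{1,1}$ regularity yields a uniform interior ball of some radius $r_0>0$, so every point within distance $r_0$ of $\pa\Om$ has a unique nearest boundary point, forcing $\Gamma\subset\{x\in\Om:d_{\pa\Om}(x)\geq r_0\}$, which is compact and contained in $\Om$ because $\Om$ is bounded. The paper states this in one line ("follows from the interior ball condition") with citations, while you supply the projection argument showing the second nearest point would fall strictly inside the interior ball, which is the correct way to make it rigorous.
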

Geometrically, the lemma above follows from the fact that such domains satisfy the {\it interior ball condition}. In particular, there is a neighborhood $U$ of $\pa\Om$ (the very same neighborhood used in Theorem \ref{smooth}) where $d_{\pa\Om}(z)$ is differentiable.

When it comes to the set $\Om\setminus U$ it remains to observe that $d_{\pa\Om}$ is differentiable on $\Om\setminus\Gamma$ and continuously differentiable on $\Om\setminus\Sigma$  as there (and not only on $\Omega\cap U$) each point has a unique closest point on $\pa\Om$. Further regularity of $d_{\pa\Om}$ follows from the classical lemma of Gilbarg and Trudinger \cite{GT}, see also \cite{KP}. We formulate the relevant result in a way suitable for our needs. The calculus can be found in Gilbarg-Trudinger, (\cite{GT}, Lemma 14.17) for points near the boundary and in \cite{LLL} in full generality.
\begin{lem}[  ]\label{GiTr}
Let $\Om\subset \mathbb R^{m}$ be a bounded domain with $C^k$-smooth boundary, $k\geq2$. Then $d_{\pa\Om}$ is $C^k$ smooth on $\overline{\Om}\setminus\Sigma$.	In local real orthogonal coordinates $(t_1,\cdots,t_{m})=(t',t_{m})$, called principal coordinates, such that $\frac{\pa}{\pa t_{m}}$ is the inward normal at the origin and near the origin $\pa\Om$ is given by $\pa\Om=(t',h(t'))$, $h\in C^k,\ h(0')=0, \nabla h(0')=0'$ and 
$$\frac{\pa^2 h}{\pa t_i\pa t_j}(0')=\kappa_i\delta_{ij},\quad i,j=1,\cdots,m-1$$
(here $\kappa_j$'s are the principal curvatures of $\pa\Om$ at the origin) the real Hessian of $d_{\pa\Om}$ at $(0',t_{m})$ reads
$$  \frac{\pa^2 d_{\pa\Om}}{\pa t_i\pa t_j}(0',t_{m})=\begin{pmatrix}
 { \frac{-\kappa_{i}}{1-t_{m}\kappa_{i}}\delta_{ij}}&\begin{matrix}0 \\ \vdots \end{matrix} \\ 

\begin{matrix}0 & \cdots \end{matrix} &0	
\end{pmatrix}_{i,j=1}^{m}.$$
\end{lem}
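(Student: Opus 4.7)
The plan is to realize the closest-point map $w \colon \overline{\Om}\setminus\Sigma \to \pa\Om$ as the solution of a smooth implicit equation, use the implicit function theorem to extract $C^{k-1}$ regularity of $w$, upgrade to $C^k$ regularity of $d_{\pa\Om}$ via the eikonal identity $\nabla d_{\pa\Om} = \nu\circ w$, and finally read off the Hessian by a direct chain-rule calculation in principal coordinates.

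Fix $x_0 \in \overline{\Om}\setminus\Sigma$ with unique closest boundary point $w_0$, and work in principal coordinates at $w_0$ so that $\pa\Om$ is the $C^k$ graph $\gamma(t') = (t', h(t'))$. The condition that $\gamma(t')$ realizes the distance from $x$ is orthogonality of $x - \gamma(t')$ to every tangent vector $(e_k, \pa_k h(t'))$, i.e.\ the vanishing of
$$G_k(x,t') := (x_k - t_k) + (x_m - h(t'))\,\pa_k h(t'), \qquad k = 1,\ldots,m-1.$$
Since $h\in C^k$, this is a $C^{k-1}$ system in $(x,t')$. A direct computation gives
$$\frac{\pa G_k}{\pa t'_j}(x_0, 0') = -(1 - x_{0,m}\kappa_k)\,\delta_{kj},$$
which is invertible precisely when $x_0$ is off the cut locus. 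The implicit function theorem then yields $t'(x) \in C^{k-1}$ near $x_0$, and hence $w(x) = \gamma(t'(x)) \in C^{k-1}$.

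To upgrade to $C^k$ regularity of $d_{\pa\Om}$, I invoke the identity $\nabla d_{\pa\Om}(x) = \nu(w(x))$, which follows by differentiating $d_{\pa\Om}^2(x) = \|x - w(x)\|^2$ and noting that $\pa_j w(x)$ is tangent to $\pa\Om$ while $x - w(x)$ is normal. This writes $\nabla d_{\pa\Om}$ as the composition of two $C^{k-1}$ maps, so $d_{\pa\Om} \in C^k$ on $\overline{\Om}\setminus\Sigma$. For the Hessian at $x_0 = (0', t_m)$ I differentiate $\pa d_{\pa\Om}/\pa x_j = \nu_j(w(x))$ by the chain rule. Using $\nu(t', h(t')) = (-\nabla h(t'), 1)/\sqrt{1 + |\nabla h(t')|^2}$ one finds $\pa\nu_j/\pa t'_l(0') = -\kappa_l\delta_{jl}$ for $j \le m-1$ and $\pa\nu_m/\pa t'_l(0') = 0$, while the IFT formula above gives $\pa t'_l/\pa x_i(x_0) = \delta_{li}/(1 - t_m\kappa_i)$ for $i \le m-1$ and $\pa t'_l/\pa x_m(x_0) = 0$. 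Combining these in
$$\frac{\pa^2 d_{\pa\Om}}{\pa x_i\pa x_j}(x_0) = \sum_l \frac{\pa\nu_j}{\pa t'_l}(0') \cdot \frac{\pa t'_l}{\pa x_i}(x_0)$$
reproduces the claimed matrix, with diagonal entries $-\kappa_i/(1 - t_m\kappa_i)$ in the upper-left $(m-1)\times(m-1)$ block and a vanishing last row and column.

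The main obstacle is the non-degeneracy of $\pa G/\pa t'$ off the cut locus, i.e.\ the geometric fact that $1 - d_{\pa\Om}(x_0)\kappa_k(w_0) \neq 0$ for every principal curvature at $w_0$ whenever $x_0 \in \overline{\Om}\setminus\Sigma$. This is the classical statement that interior focal points of $\pa\Om$ are contained in $\Sigma$; once it is granted, everything else is routine implicit-function-theorem bookkeeping.
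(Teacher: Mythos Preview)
Your argument is correct and is essentially the classical implicit-function-theorem derivation. The paper does not supply its own proof of this lemma but cites Gilbarg--Trudinger (Lemma~14.17) for points near the boundary and \cite{LLL} for the full statement; what you have written is precisely that standard computation, including the identification of the focal-point condition $1 - d_{\pa\Om}(x_0)\kappa_k > 0$ on $\overline{\Om}\setminus\Sigma$ as the one nontrivial geometric input.
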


We remark that the formula holds along the ray spanned by $\frac{\pa}{\pa t_{m}}$ as long as $(0',0)$ remains the unique closest point to the boundary. Another remark is that the denominators $1-t_{m}\kappa_{i}$ remain positive whenever the formula holds and in particular provide a bound for the location of closest point from $\Sigma$ along the ray.

{\bf 3. Generalized Laplacians and removal of singularities.}
Recall the definition of the {\it generalized  upper Laplace parameter} (or the {\it upper Privalov operator} as it is sometimes called): 
\begin{defi}
Let $\Omega\subset \mathbb R^{m}$ be a domain and let $u\neq-\infty$ be an upper semicontinuous function on $\Omega$. For $x\in\Omega$ and $r$ such that the ball $\overline{B(x,r)}\subset \Omega$ one defines the integral mean of $u$ over the ball by  $$M_u(x,r):=\frac{\int_{B(x,r)} u d\lambda^{m} }{\lambda^{m}(B(x,r))}\in[-\infty,\infty),$$
(with $\lambda^{m}$ denoting the $m$-dimensional Lebesgue measure).

Then, the {\bf generalized  upper Laplace parameter} is defined by
$$\overline{\Delta} u(x):=2(m+2)\limsup_{r\to 0^+}\frac{M_u(x,r)-u(x)}{r^2}$$
for any $x\in\Omega\setminus\{u=-\infty\}$.
\end{defi}


For a $C^2$ function $\overline{\Delta} u(x)$ is the classical Laplacian of $u$ at $x$, hence $\overline{\Delta}$ is a kind of a generalized Laplacian. A modern exposition to these matters, with some recent improvements, is given in \cite{P, SS}. We will need two results about $\overline{\Delta}$ which can be found there. The first one relates the subharmonicity of $u$ to the values of the generalized  upper Laplace parameter.

\begin{thm}[Blaschke-Privalov criterion] An upper semicontinuous function $u\neq-\infty$ on $\Omega$ is subharmonic if and only if $\overline{\Delta} u(x)\geq 0$ for any $x\in\Omega\setminus\{u=-\infty\}$.
\end{thm}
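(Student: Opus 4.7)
The plan is to prove both implications. For the forward direction, if $u$ is subharmonic then the standard sub-mean-value inequality gives $u(x)\leq M_u(x,r)$ for every admissible radius, forcing the difference quotient $(M_u(x,r)-u(x))/r^2$ to be non-negative, and hence so is $\overline{\Delta}u(x)$.

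For the reverse direction I would use the classical characterization that $u$ is subharmonic iff for every closed ball $\overline{B(x_0,R)}\subset\Omega$ and every $h\in C(\overline{B})$ harmonic in $B$ with $h\geq u$ on $\partial B$, one has $u\leq h$ in $B$. Fix such $B,h$, set $w:=u-h$, and suppose for contradiction that $\sup_{\overline B} w>0$ (the supremum is attained since $w$ is u.s.c.\ on a compact set). For $\varepsilon>0$ sufficiently small, introduce the Alexandrov-type perturbation
$$w_\varepsilon(x):=w(x)+\varepsilon\bigl(|x-x_0|^2-R^2\bigr).$$
On $\partial B$ we still have $w_\varepsilon\leq 0$, while $\sup w_\varepsilon>0$ for small $\varepsilon$, so $w_\varepsilon$ attains a positive maximum at some interior point $y_\varepsilon\in B$. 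Since $w_\varepsilon(y_\varepsilon)>0$ we have $u(y_\varepsilon)>-\infty$, so the hypothesis $\overline{\Delta}u(y_\varepsilon)\geq 0$ is available at this point.

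The next ingredient is an additivity formula: for $g$ upper semicontinuous and $\phi\in C^2$, the identity $M_{g+\phi}(y,r)=M_g(y,r)+M_\phi(y,r)$ combined with the elementary expansion $M_\phi(y,r)=\phi(y)+\Delta\phi(y)\,r^2/(2(m+2))+o(r^2)$ yields $\overline{\Delta}(g+\phi)(y)=\overline{\Delta}g(y)+\Delta\phi(y)$. Applying this to $w_\varepsilon=u+\bigl(-h+\varepsilon(|x-x_0|^2-R^2)\bigr)$ and using $\Delta h\equiv 0$ gives
$$\overline{\Delta}w_\varepsilon(y_\varepsilon)=\overline{\Delta}u(y_\varepsilon)+2m\varepsilon\geq 2m\varepsilon>0.$$
On the other hand, the fact that $y_\varepsilon$ is an interior maximum of the u.s.c.\ function $w_\varepsilon$ forces $M_{w_\varepsilon}(y_\varepsilon,r)\leq w_\varepsilon(y_\varepsilon)$ for all small $r$, hence $\overline{\Delta}w_\varepsilon(y_\varepsilon)\leq 0$, a contradiction. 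Therefore $\sup_{\overline B} w\leq 0$, i.e.\ $u\leq h$ on $B$, and $u$ is subharmonic.

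The main technical obstacle is the additivity lemma $\overline{\Delta}(g+\phi)=\overline{\Delta}g+\Delta\phi$ for $C^2$ summands $\phi$, which is routine but requires a careful handling of the limsup together with the fact that $M_g(y,r)$ could \emph{a priori} be $-\infty$; once that is in place, the purpose of the perturbation $\varepsilon(|x-x_0|^2-R^2)$ becomes transparent, namely to promote the mere non-negativity of $\overline{\Delta}u$ to a strict inequality at $y_\varepsilon$ that contradicts the interior maximum estimate.
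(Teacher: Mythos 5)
The paper does not actually prove the Blaschke--Privalov criterion; it states it as a known theorem and cites Pokrovski\u{\i} \cite{P} and Sadullaev--Shopulatov \cite{SS}, so there is no in-paper argument to compare against. Your proposal is a correct, self-contained proof by the classical route: necessity via the solid sub-mean-value inequality, and sufficiency by comparison with harmonic functions together with an Alexandrov-type quadratic perturbation $\varepsilon(|x-x_0|^2-R^2)$ to upgrade $\overline{\Delta}u\geq 0$ to strict positivity at an interior maximum of $w_\varepsilon=u-h+\varepsilon(|x-x_0|^2-R^2)$, contradicting the elementary estimate $\overline{\Delta}w_\varepsilon(y_\varepsilon)\leq 0$ at a local max. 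The one lemma you flag, the additivity $\overline{\Delta}(g+\phi)=\overline{\Delta}g+\Delta\phi$ for $\phi\in C^2$, is indeed routine: since $\bigl(M_\phi(y,r)-\phi(y)\bigr)/r^2$ has a finite limit $\Delta\phi(y)/(2(m+2))$, it can be separated from the limsup, and the identity holds in the extended sense even when $M_g(y,r)=-\infty$; moreover $u(y_\varepsilon)>-\infty$ is guaranteed because $w_\varepsilon(y_\varepsilon)>0$ with $h$ finite, so the hypothesis $\overline{\Delta}u(y_\varepsilon)\geq 0$ is legitimately available. The only point worth stating explicitly is that $h$, being harmonic in the open ball, is automatically $C^\infty$ there, so the additivity lemma applies at the interior point $y_\varepsilon$. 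In short: the paper defers to the literature, and your argument is a clean reconstruction of the standard proof.
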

The second one, which is a stronger result reads:
\begin{thm}[Privalov]
	An upper semicontinuous function $u\neq-\infty$ on $\Omega$ is subharmonic if and only if $\overline{\Delta} u(x)\geq 0$ almost everywhere in $\Omega$ with respect to the $m$ dimensional Lebesgue measure $\lambda^{m}$ and $\overline{\Delta} u(x)> -\infty$ for any $x\in\Omega\setminus(\{u=-\infty\}\cup P)$, where $P$ is a polar set.
\end{thm}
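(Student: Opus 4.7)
My plan is to prove the two directions separately. The forward direction is immediate: if $u$ is subharmonic, then the sub-mean-value inequality $u(x) \leq M_u(x,r)$ for every sufficiently small $r$ gives $\overline{\Delta} u(x) \geq 0$ at every point $x \in \Omega \setminus \{u = -\infty\}$. Since for a nontrivial subharmonic function the set $\{u = -\infty\}$ is itself polar, both conditions (i) ``a.e.\ nonnegative'' and (ii) ``finite off a polar set'' follow in a strengthened form (one may take the polar exceptional set $P$ empty).

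For the converse, the plan is to reduce to the Blaschke--Privalov criterion stated just above, which requires $\overline{\Delta} u \geq 0$ pointwise off $\{u = -\infty\}$ rather than merely almost everywhere. I would argue by contradiction. If $u$ fails to be subharmonic on some ball $\overline{B(c,R)} \Subset \Omega$, then there exists a continuous function $h$, harmonic on $B := B(c,R)$, with $u \leq h$ on $\partial B$ but $u(c) > h(c)$. Setting $v := u - h$, the upper semicontinuous function $v$ is non-positive on $\partial B$ and attains its positive supremum at an interior point. I would then introduce the quadratic perturbation
\[
v_\epsilon(x) := v(x) + \epsilon\bigl(\|x - c\|^2 - R^2\bigr),
\]
which preserves $v_\epsilon \leq 0$ on $\partial B$ and satisfies $v_\epsilon(c) > 0$ for $\epsilon$ small, hence attains a positive interior maximum at some $x^*_\epsilon \in B$. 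Upper semicontinuity at the maximum forces $M_{v_\epsilon}(x^*_\epsilon,r) \leq v_\epsilon(x^*_\epsilon)$ for all small $r$, so $\overline{\Delta} v_\epsilon(x^*_\epsilon) \leq 0$. Combining the harmonicity of $h$ with the classical identity $\Delta \|x - c\|^2 = 2m$ (which feeds into $\overline{\Delta}$ as a genuine limit, not merely a $\limsup$), one reads off
\[
\overline{\Delta} u(x^*_\epsilon) \;\leq\; -2m\epsilon.
\]

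The hard part, and the main obstacle, is upgrading this single-point estimate to a \emph{positive-measure} estimate that would genuinely contradict hypothesis (i). The standard remedy is to vary the center of the perturbation: for each $y$ in a small open set near the interior maximum of $v$, repeat the argument with $\|x - c\|^2$ replaced by $\|x - y\|^2$ (adjusting the constant to keep boundary values non-positive), producing a maximizer $x^*_{\epsilon,y}$ with $\overline{\Delta} u(x^*_{\epsilon,y}) \leq -2m\epsilon$. An Aleksandrov--Bakelman--Pucci-type non-degeneracy argument --- exploiting the strict convexity of the quadratic perturbation to lower-bound the Jacobian of the assignment $y \mapsto x^*_{\epsilon,y}$ --- would then show that the image of this map has positive Lebesgue measure, exhibiting a set of positive measure on which $\overline{\Delta} u \leq -2m\epsilon$, in direct contradiction to (i). The role of hypothesis (ii) is subtler: it is needed to rule out the degenerate scenario in which the maximizers concentrate on a capacity-zero set where $\overline{\Delta} u = -\infty$ in a way that would invalidate the measure-theoretic bookkeeping of the non-degeneracy step. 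Once the contradiction is reached for every ball, $u$ satisfies the sub-mean-value property and is therefore subharmonic.
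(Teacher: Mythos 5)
The paper does not prove this theorem: it is stated as a quoted result and attributed to the expositions in \cite{P} and \cite{SS}, so there is no internal proof to compare against and your argument has to be judged on its own. Your forward direction is fine, but the converse has a genuine gap exactly where you flag the difficulty. The Aleksandrov--Bakelman--Pucci nondegeneracy step --- lower-bounding the Jacobian of the contact map $y \mapsto x^*_{\epsilon,y}$ to conclude that its image has positive Lebesgue measure --- requires the touched function to be semiconvex (equivalently, twice differentiable almost everywhere in the Aleksandrov sense), so that the area formula can be invoked. A general upper semicontinuous $u$ satisfying only the stated $\overline{\Delta}$-hypotheses has none of that regularity: the contact map need not be single-valued or measurable, and there is no pointwise second-order information with which to bound a Jacobian. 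The usual fix of pre-regularizing by sup-convolution to gain semiconvexity does not help, because the hypothesis $\overline{\Delta} u \geq 0$ a.e.\ does not transfer to the sup-convolution in any controlled way (its $\overline{\Delta}$-value at $x$ reflects that of $u$ at an uncontrolled argmax point), so the ABP scheme cannot be closed.

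A second, independent problem is that the role of the polar-set hypothesis is left as an appeal to a vague ``degenerate scenario,'' whereas in the actual proofs this condition enters through a removable-singularity mechanism: one first handles the complement of the polar set via a Blaschke--Privalov-type argument and then extends subharmonicity across the polar exceptional set using the classical fact that a function subharmonic off a polar set, upper semicontinuous and locally bounded above, is subharmonic everywhere. Your sketch never invokes anything of this kind, so as written the theorem is not established. If you want an honest route, either consult \cite{P} or \cite{SS} directly, or adopt the potential-theoretic proof (perturb by $\epsilon\lVert x\rVert^2$, reduce to showing the positive-maximum set of $u-h$ cannot be a null set off which $\overline{\Delta}$ is finite, and invoke the polar removability step), rather than an ABP argument designed for semiconvex viscosity subsolutions.
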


We will need the following simple observation:
\begin{lem}\label{supf} Let $\Omega\subset\mathbb R^{m}$ be a domain and let $\mathcal F$ be a family of upper semicontinuous functions on $\Omega$ which are locally uniformly bounded above and such that for any $\varphi\in \mathcal F$ one has $\overline{\Delta} \varphi\geq \psi$, where $\psi$ is a fixed lower semicontinuous function on $\Omega$. Then, the upper semicontinuous regularization of the supremum of the family, defined as
	$$\phi^{\ast}(x):=\limsup_{\Omega \ni y\to x} \phi (y),\quad \text{ where } \phi (y):=\sup_{\varphi\in\mathcal F} \varphi (y) $$
	also satisfies
	$$\overline{\Delta} \phi^{\ast}\geq \psi.$$
\end{lem}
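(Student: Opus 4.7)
\emph{Plan.} The plan is to reduce the inequality $\overline{\Delta}\phi^{\ast}\geq\psi$ to the classical envelope theorem for subharmonic families (the special case $\psi\equiv 0$). The key trick is a local quadratic twist: at a fixed base point $x_{0}$, subtract from every member of $\mathcal{F}$ a scalar multiple of $|y-x_{0}|^{2}$ whose Laplacian cancels $\psi(x_{0})$ up to an arbitrarily small error, so that the shifted family becomes subharmonic on a small ball around $x_{0}$.

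Concretely, I first fix $x_{0}\in\Omega$ with $\phi^{\ast}(x_{0})\neq -\infty$; the case $\psi(x_{0})=-\infty$ is trivial, so I also assume $\psi(x_{0})>-\infty$. For $\varepsilon>0$, the lower semicontinuity of $\psi$ supplies a ball $V:=B(x_{0},\rho)\Subset\Omega$ on which $\psi\geq c:=\psi(x_{0})-\varepsilon$. Set $q(y):=|y-x_{0}|^{2}/(2m)$; a direct integration gives $M_{q}(y,r)-q(y)=r^{2}/(2(m+2))$, so the additivity $M_{u+v}=M_{u}+M_{v}$ of integral means yields $\overline{\Delta}(\varphi-cq)(y)=\overline{\Delta}\varphi(y)-c$ for every $\varphi\in\mathcal{F}$ and every $y\in V$. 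Consequently each $\varphi-cq$ satisfies $\overline{\Delta}(\varphi-cq)\geq\psi-c\geq 0$ on $V$ and, being upper semicontinuous, is subharmonic on $V$ by the Blaschke--Privalov criterion.

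The family $\{\varphi-cq\}_{\varphi\in\mathcal{F}}$ is locally uniformly bounded above on $V$, so the classical envelope theorem for subharmonic functions yields that $\bigl(\sup_{\varphi\in\mathcal{F}}(\varphi-cq)\bigr)^{\ast}$ is subharmonic on $V$. Because $cq$ is continuous, it pulls out of both the pointwise supremum and the upper semicontinuous regularization, so this envelope equals $\phi^{\ast}-cq$. Hence $\phi^{\ast}-cq$ is subharmonic on $V$, which gives $\overline{\Delta}\phi^{\ast}(x_{0})\geq c\,\Delta q(x_{0})=\psi(x_{0})-\varepsilon$. Letting $\varepsilon\to 0^{+}$ completes the argument.

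I foresee no serious obstacle. The two steps needing brief verification are the additivity $\overline{\Delta}(u+v)=\overline{\Delta}u+\Delta v$ for $v\in C^{2}$ and the commutation of upper semicontinuous regularization with subtraction of a continuous function; both are routine. The substantive input is the envelope theorem for subharmonic families, which can be invoked as a black box from the potential theory references \cite{P,SS} already cited above.
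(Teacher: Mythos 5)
Your proof is correct and follows essentially the same strategy as the paper's: twist each $\varphi\in\mathcal F$ by a quadratic whose Laplacian matches the local lower bound on $\psi$ supplied by lower semicontinuity, invoke the Blaschke--Privalov criterion together with the envelope theorem for subharmonic functions on a small ball, and let the ball shrink. You normalize the quadratic correctly as $q(y)=\vert y-x_{0}\vert^{2}/(2m)$ (so $\Delta q\equiv 1$), whereas the paper's choice $\frac{C_{\varepsilon}}{2}\Vert y\Vert^{2}$ has Laplacian $C_{\varepsilon}m$ and thus carries a harmless factor-of-$m$ slip that your version silently repairs.
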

\begin{proof} 
	
	Fix $x\in\Omega$ and $\varepsilon>0$ such that $\overline{B(x,\varepsilon)}\subset\Omega$. Let $C_{\varepsilon}:=\min_{y\in \overline{B(x,\varepsilon)}}\psi(y)$.  Take any $\varphi\in\mathcal F$. As $\overline{\Delta}\left( \varphi(y)-\frac{C_{\varepsilon}}{2}\Vert y\Vert^2\right)\geq 0$ throughout $B(x,\varepsilon)$, the functions $\varphi(y)-\frac{C_{\varepsilon}}{2}\Vert y\Vert^2$ are subharmonic in $B(x,\varepsilon)$, by the theorem of Blaschke-Privalov. But then $\phi^{\ast}(y)-\frac{C_{\varepsilon}}{2}\Vert y\Vert^2$ is also subharmonic and again by the Blaschke-Privalov theorem one has  $\overline{\Delta} \phi^{\ast}\geq C_{\varepsilon}$ in $B(x,\varepsilon)$. Finally,
	$$\overline{\Delta} \phi^{\ast}(x)\geq \lim_{\varepsilon\to 0^{+}} C_{\varepsilon}=\lim_{\varepsilon\to 0^{+}}\min_{y\in \overline{B(x,\varepsilon)}}\psi(y)=\psi(x),$$ by the lower semicontinuity of $\psi$.
\end{proof}

The following theorem from \cite{DD} will be crucial:
\begin{thm}\label{didi}
	Let $\Om\subset \C$ be a domain and $E\subset\Om$ be a closed set of zero Lebesgue measure. Let $u$ be a subharmonic function on $\Om$ which is plurisubharmonic on $\Om\setminus E$. Then $u$ is plurisubharmonic in the whole domain $\Om$. 
\end{thm}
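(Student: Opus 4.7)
The plan is to reduce the problem to a statement about the plurisubharmonic envelope of $u$, exploiting a key regularity property of subharmonic functions to bridge across the negligible set $E$. First I would truncate by setting $u_M := \max(u, -M)$ for large $M$, so that $u_M$ remains subharmonic on $\Om$ and plurisubharmonic on $\Om \setminus E$, and is now bounded below by $-M$. Since $u_M \downarrow u$ as $M \to \infty$ and decreasing limits of plurisubharmonic functions are plurisubharmonic, it suffices to show each $u_M$ is plurisubharmonic. The elementary observation to be extracted from the subharmonicity of $u_M$ is that, for every $a \in \Om$,
\[u_M(a) = \limsup_{y \to a,\ y \in \Om \setminus E} u_M(y).\]
This follows by combining upper semicontinuity of $u_M$ with the convergence of the ball averages $\frac{1}{\lambda^{2n}(B(a,r))}\int_{B(a,r)} u_M$ to $u_M(a)$ as $r \to 0^+$: since $E$ has Lebesgue measure zero the averages are unaffected by removing $E$, forcing, for every $\varepsilon > 0$ and every small enough $r$, the existence of a point $y \in B(a, r) \setminus E$ with $u_M(y) > u_M(a) - \varepsilon$.

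I would then form the plurisubharmonic envelope $v := \bigl(\sup\{\varphi \in \psh(\Om) : \varphi \leq u_M\}\bigr)^{\ast}$, where ${}^{\ast}$ denotes the upper semicontinuous regularization. The family is non-empty (it contains the constant $-M$) and locally bounded above by $u_M$, so $v \in \psh(\Om)$ by the standard Choquet-type argument, and $v \leq u_M$. The remaining task is to show $v = u_M$. On $\Om \setminus E$, where $u_M$ is locally plurisubharmonic, a local-to-global gluing argument produces, for any $a \in \Om \setminus E$ and $\delta > 0$, a $\varphi_\delta \in \psh(\Om)$ with $\varphi_\delta \leq u_M$ on $\Om$ and $\varphi_\delta(a) > u_M(a) - \delta$; this yields $v = u_M$ on $\Om \setminus E$. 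For $a \in E$, the key identity above then gives
\[v(a) = \limsup_{y \to a} v(y) \geq \limsup_{y \to a,\ y \in \Om \setminus E} v(y) = \limsup_{y \to a,\ y \in \Om \setminus E} u_M(y) = u_M(a),\]
so $v = u_M$ throughout $\Om$. Hence $u_M = v$ is plurisubharmonic, and letting $M \to \infty$ yields $u \in \psh(\Om)$.

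The main obstacle is the gluing step on $\Om \setminus E$: a naive cut-and-paste with a constant is not upper semicontinuous across the boundary of the local ball, so $\varphi_\delta$ must be constructed more carefully by combining $u_M$ on a small ball $B(a,\rho) \subset \Om \setminus E$ with a plurisubharmonic quadratic of the form $A(\|z-a\|^2 - \rho^2)$, and tuning $A, \rho$ and a large negative constant $-C$ so that a $\max$ transitions correctly to $-C$ outside $B(a,\rho)$ while remaining below $u_M$ globally. Once this standard pluripotential-theoretic input is in place the rest of the argument is essentially formal; the crucial ingredients beyond it are the sub-mean-value identity for the subharmonic $u_M$ and the fact that $E$ has Lebesgue measure zero, each used exactly once.
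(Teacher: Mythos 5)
Your truncation reduction and the key observation are sound: subharmonicity of $u_M$ together with $\lambda^{2n}(E)=0$ does force $u_M(a)=\limsup_{y\to a,\, y\notin E}u_M(y)$ at every $a\in\Om$, and the envelope $v$ is plurisubharmonic with $v\leq u_M$. The fatal problem is the gluing step, which cannot work as described and, as far as I can see, cannot be repaired within the envelope framework. Any $\varphi_\delta\in\psh(\Om)$ that equals the constant $-C$ on $\Om\setminus B(a,\rho)$ equals $-C$ on $\pa B(a,\rho)$, so by the maximum principle on the relatively compact ball $B(a,\rho)$ one gets $\varphi_\delta\leq -C$ on all of $\Om$. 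On the other hand the only a priori control you have on $u_M$ outside $B(a,\rho)$ is $u_M\geq -M$, so to guarantee $\varphi_\delta\leq u_M$ globally you must take $C\geq M$; hence $\varphi_\delta(a)\leq -M$, which is in general far below $u_M(a)$. No tuning of $A$, $\rho$, $C$ avoids this, because the obstruction is the maximum principle itself, not the specific quadratic. Consequently the claim $v=u_M$ on $\Om\setminus E$ is not established, and in fact producing a global plurisubharmonic minorant of $u_M$ with the prescribed local behavior at $a$ is essentially as strong as the theorem you are trying to prove: the envelope approach presupposes what it should deliver.

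The proof in the cited reference \cite{DD} is of a different, more analytic nature, and closer in spirit to the non-smooth tools the paper itself deploys in Step~3: one works at the level of generalized (Privalov-type) second-order means taken along complex lines in each direction $\zeta$, extracts from the subharmonicity of $u$ on $\Om$ a pointwise lower bound (finiteness) for these directional Laplace parameters, uses plurisubharmonicity on $\Om\setminus E$ together with $\lambda^{2n}(E)=0$ to get nonnegativity almost everywhere, and concludes plurisubharmonicity by a Privalov-type criterion along lines. That route never needs to manufacture global plurisubharmonic competitors $\leq u_M$. If you wish to retain your global strategy, the missing ingredient is precisely a construction of $\varphi_\delta\in\psh(\Om)$ with $\varphi_\delta\leq u_M$ on $\Om$ and $\varphi_\delta(a)>u_M(a)-\delta$, and this is the step that requires a genuinely new idea beyond cut-and-paste with constants.
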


{\bf 4. Principal curvatures and the mean curvature.}
  The notions and formulas in this part are standard in differential geometry, although in the vast majority of the references they are presented only for surfaces in $\mathbb R^3$. We feel, however, that for people from other areas it is far from obvious why the mean curvature is given by an expression as in \eqref{meancurv} or \eqref{implicitform} below. For the benefit of the reader we provide the details. The computations are also helpful to understand the geometric background of the problem.
    
   For a general $C^2$ hypersurface $M$ in $\mathbb R^{m}$ the principal curvatures at points of $M$ are locally defined $(m-1)$-tuples of scalar values, whose sign depends on the choice of the ''inside'' and ''outside'' of the hypersurface, that is, of the direction of the normal vector. If $M$ is orientable (strictly speaking, coorientable) then the notion can be made global.

\begin{defi}
	The {\bf principal curvatures} $\kappa_1, \kappa_2,\ldots,\kappa_{m-1}$ at the point $w\in M$ of a $m-1$ dimensional hypersurface $M$ in $\mathbb R^{m}$ are the eigenvalues of the second fundamental form $\mathrm{I\!I}(w)$ of $M$ at $w$, that is of the $(m-1)\times(m-1)$ matrix
	 $$A(w):=\begin{pmatrix}
	 	\mathrm{I\!I}(w)(T_1,T_1) &\cdots &  \mathrm{I\!I}(w)(T_1,T_{m-1}) \\
	 	\vdots &\ddots& \vdots\\ 
	 	\mathrm{I\!I}(w)(T_{m-1},T_1) &\cdots & \mathrm{I\!I}(w)(T_{m-1},T_{m-1}) 
	 \end{pmatrix},$$
 where $T_{j},\, j=1,\ldots,m-1$ form an orthonormal  basis of the tangent space  $T_wM$ (orthonormality is with respect to the inner product on $T_wM$, which is the restriction of the standard inner product on $T_w\mathbb R^{m}\cong\mathbb R^{m}$), $\mathrm{I\!I}(w)(X,Y)$ is the bilinear form $\mathrm{I\!I}(w)(X,Y)=-\langle d \nu(w) (X),Y\rangle_{T_wM}$, and $\nu$ is the Gauss map $w\to\nu(w)$ which maps the point $w\in M$ to the unit normal vector to $M$ at $w$ directed ''inside''.
 
  Alternatively,  $\kappa_1, \kappa_2,\ldots,\kappa_{m-1}$ are given as the eigenvalues of the matrix
 $$B(w):=\begin{pmatrix}
 	\mathrm{I}_{11} &\cdots & 	\mathrm{I}_{1(m-1)}\\
 	\vdots &\ddots & \vdots\\
 		\mathrm{I}_{(m-1)1}&\cdots &	\mathrm{I}_{(m-1)(m-1)}
 \end{pmatrix}^{-1}\begin{pmatrix}
 \mathrm{I\!I}_{11} &\cdots & 	\mathrm{I\!I}_{1(m-1)}\\
 \vdots &\ddots & \vdots\\
 \mathrm{I\!I}_{(m-1)1}&\cdots &	\mathrm{I\!I}_{(m-1)(m-1)}
\end{pmatrix},$$
where $ \mathrm{I}_{jk}$ and $\mathrm{I\!I}_{jk}$ are the coefficients of the first and second fundamental forms respectively in a given, not necessarily orthonormal, basis of the tangent space at $w$.

 The {\bf principal directions} are tangent directions of $M$ at $w$ given by the eigenvectors corresponding to $\kappa_1,\ldots,\kappa_{m-1}$. 
\end{defi}

The coefficients of the first fundamental form in the not necessarily orthonormal basis $e_1,\ldots, e_{m-1}$ of $T_wM$ are given by $\mathrm{I}_{jk}=\langle e_j,e_k\rangle_{T_wM}$. This is just the Gram matrix of the basis. So, the first fundamental form can be thought of as the Riemannian metric induced on the hypersurface by the Euclidean metric of $\mathbb R^{m}$. In the same basis the coefficients of the second fundamental form are given by $\mathrm{I\!I}_{jk}=\mathrm{I\!I}(e_j,e_k)$.

The negative of the differential of the Gauss map, that is $-d\nu$, is called the Weingarten map or the shape operator. Note that formally $-d\nu$ sends $T_w M$ to the tangent space of the unit sphere  at $\nu(w)$. It is, however, parallel to, and hence can be identified with $T_wM$. Thus, at $w\in M$ the shape operator can be thought of as a linear transformation $S:T_wM\to T_wM$. The formula $\mathrm{I\!I}(X,Y)=\mathrm{I}(S(X),Y)$ is well-known and sometimes useful.

If the hypersurface is locally parameterized by
$$\mathbb R^{m-1}\supset U\ni (t_1,\ldots,t_{m-1})=t\to \varphi(t)=(\varphi_{1}(t),\ldots,\varphi_{m}(t))\in M\subset \mathbb R^{m}$$
in some local coordinate system $(t_1,\ldots,t_{m-1})$, where we assume that the Jacobian of $\varphi$ is of maximal rank at $\varphi^{-1}(w)$, then 
the Gauss map is 
 $\nu(w)=\nu(\varphi(t))=(n_1(w),\ldots,n_{m}(w))^{T}$ where
$$n_k(w)=\frac{(-1)^{k+m}\det\begin{pmatrix}
		\frac{\partial \varphi_{1}}{\partial t_1} &\cdots & \frac{\partial \varphi_{1}}{\partial t_{m-1}}\\
		\vdots &\ddots& \vdots\\ 
		\frac{\partial \varphi_{k-1}}{\partial t_1} &\cdots & \frac{\partial \varphi_{k-1}}{\partial t_{m-1}}\\[0.5em]
		\frac{\partial \varphi_{k+1}}{\partial t_1} &\cdots & \frac{\partial \varphi_{k+1}}{\partial t_{m-1}}\\
		\vdots &\ddots& \vdots\\
		\frac{\partial \varphi_{m}}{\partial t_1} &\cdots & \frac{\partial \varphi_{m}}{\partial t_{m-1}}
\end{pmatrix}}{\sqrt{{\displaystyle \sum_{j=1}^{m}}\left(\det\begin{pmatrix}
			\frac{\partial \varphi_{1}}{\partial t_1} &\cdots & \frac{\partial \varphi_{1}}{\partial t_{m-1}}\\
			\vdots & \ddots& \vdots\\
			\frac{\partial \varphi_{j-1}}{\partial t_1} &\cdots & \frac{\partial \varphi_{j-1}}{\partial t_{m-1}}\\[0.5em]
			\frac{\partial \varphi_{j+1}}{\partial t_1} &\cdots & \frac{\partial \varphi_{j+1}}{\partial t_{m-1}}\\
			\vdots &\ddots& \vdots\\
			\frac{\partial \varphi_{m}}{\partial t_1} &\cdots & \frac{\partial \varphi_{m}}{\partial t_{m-1}}
		\end{pmatrix}\right)^2}},\quad k=1,\ldots,m.$$
	Thus, computing the vector-valued differential form $d\nu=(d n_1(w),\ldots, dn_{m}(w))^{T}$ and hence $A(w)$ or $B(w)$ is very involved if $m>3$. 

What is important is that the principal curvatures do not depend on the choice of a particular local parameterization.
  If the parameterization is that of a graph of a $C^2$ function $\psi$:  
 $$\mathbb R^{m-1}\supset U\ni (t_1,\ldots,t_{m-1})=t\to \varphi(t)=(t_1,\ldots,t_{m-1},\psi(t_1,\ldots,t_{m-1}))\in M\subset \mathbb R^{m},$$ so that the ''inside'' is the epigraph (or ''above''),  
 the computations simplify significantly. 
 The Gauss map then reads:
  $$\nu(w)=\left(\frac{-\frac{\partial \psi}{\partial t_1}}{\sqrt{1+\Vert \nabla \psi \Vert^2}},\cdots,\frac{-\frac{\partial \psi}{\partial t_{m-1}}}{\sqrt{1+\Vert \nabla \psi \Vert^2}},\frac{1}{\sqrt{1+\Vert \nabla \psi \Vert^2}}\right)^{T}.$$
  (we use the column notation for vectors and ''$T$'' denotes transposition).
  
  The second fundamental form reads:
  \begin{align*}\mathrm{I\!I}(w)(X,Y)&=-\langle d \nu(w) (X),Y\rangle\\&=\sum_{i=1}^{m-1}\sum_{j=1}^{m}\frac{\frac{\partial^2 \psi }{\partial t_{j}\partial t_{i}}}{\sqrt{1+ \Vert \nabla \psi(t) \Vert^2}} X_{j}Y_{i}- \sum_{i=1}^{m-1}\sum_{j=1}^{m}\frac{\frac{\partial\sqrt{1+ \Vert \nabla \psi(t) \Vert^2}}{\partial t_{j}} \frac{\partial \psi }{\partial t_{i}}}{{1+ \Vert \nabla \psi(t) \Vert^2}} X_{j}Y_{i}\\ &-\sum_{j=1}^{m}\frac{\frac{\partial 1 }{\partial t_{j}}}{\sqrt{1+ \Vert \nabla \psi(t) \Vert^2}} X_{j}Y_{m}+ \sum_{j=1}^{m}\frac{\frac{\partial\sqrt{1+ \Vert \nabla \psi(t) \Vert^2}}{\partial t_{j}} }{{1+ \Vert \nabla \psi(t) \Vert^2}} X_{j}Y_{m}.\end{align*}
  The third sum vanishes and the second and fourth add to zero, because
  $$-\sum_{i=1}^{m-1}\sum_{j=1}^{m}\frac{\frac{\partial\sqrt{1+ \Vert \nabla \psi(t) \Vert^2}}{\partial t_{j}} \frac{\partial \psi }{\partial t_{i}}}{{1+ \Vert \nabla \psi(t) \Vert^2}} X_{j}Y_{i} + \sum_{j=1}^{m}\frac{\frac{\partial\sqrt{1+ \Vert \nabla \psi(t) \Vert^2}}{\partial t_{j}} }{{1+ \Vert \nabla \psi(t) \Vert^2}} X_{j}Y_{m}$$$$=\left(\sum_{j=1}^{m} \frac{\frac{\partial\sqrt{1+ \Vert \nabla \psi(t) \Vert^2}}{\partial t_{j}} }{{1+ \Vert \nabla \psi(t) \Vert^2}} X_{j} \right)\left\langle \left(-\frac{\partial \psi }{\partial t_{1}},\ldots,-\frac{\partial \psi }{\partial t_{m-1}},1\right)^{T} ,Y\right\rangle=0,$$
  as $Y$ is a tangent vector. Finally, $$\mathrm{I\!I}(w)(X,Y)=\frac{\mathcal H (\psi(t))(\tilde X,\tilde Y)}{\sqrt{{1+ \Vert \nabla \psi(t) \Vert^2}}},$$ where $\tilde X=(X_1,\ldots X_{m-1})$ for $X=(X_1,\ldots,X_m)$ and the same for $\tilde Y$. Above $\frac{\partial \psi }{\partial t_{m}}$ is understood as zero, $\mathcal H (\psi(t))$ is the real Hessian of $\psi$ at $t=\varphi^{-1}(w)$ and the bilinear form $\mathcal H (\psi(t))(\tilde X,\tilde Y)$ is identical with the expression $\tilde Y^{T}\mathcal H (\psi(t))\tilde X$.
   Observe that in the basis $e_1=\frac{\partial \varphi}{\partial t_1},\cdots,e_{m-1}=\frac{\partial \varphi}{\partial t_{m-1}}$  one has $\tilde e_1=(1,0,\ldots, 0)^{T},\ldots, \tilde e_{m-1}=(0,\ldots,0,1)$, so this is the standard basis of $\mathbb R^{m-1}$. Hence, 
   \begin{align} \begin{pmatrix}
   	\mathrm{I\!I}_{11} &\cdots & 	\mathrm{I\!I}_{1(m-1)}\\
   	\vdots &\ddots & \vdots\\
   	\mathrm{I\!I}_{(m-1)1}&\cdots &	\mathrm{I\!I}_{(m-1)(m-1)}
   \end{pmatrix}=&\begin{pmatrix}
   \frac{\mathcal H (\psi(t))(\tilde e_1,\tilde e_1)}{\sqrt{{1+ \Vert \nabla \psi(t) \Vert^2}}} &\cdots & 	\frac{\mathcal H (\psi(t))(\tilde e_1,\tilde e_{m-1})}{\sqrt{{1+ \Vert \nabla \psi(t) \Vert^2}}}\\
   \vdots &\ddots & \vdots\\
   \frac{\mathcal H (\psi(t))(\tilde e_{m-1},\tilde e_1)}{\sqrt{{1+ \Vert \nabla \psi(t) \Vert^2}}}&\cdots &	\frac{\mathcal H (\psi(t))(\tilde e_{m-1},\tilde e_{m-1})}{\sqrt{{1+ \Vert \nabla \psi(t) \Vert^2}}}
\end{pmatrix}\nonumber\\=&\frac{\mathcal H (\psi(t))}{\sqrt{{1+ \Vert \nabla \psi(t) \Vert^2}}}.\label{finally}\end{align}

  In the same basis $\frac{\partial \varphi}{\partial t_1},\cdots,\frac{\partial \varphi}{\partial t_{m-1}}$ the matrix of the first fundamental form has coefficients $$\mathrm{I}_{ij}=\left\langle \left(\delta_{i1},\ldots,\delta_{i(m-1)} ,\frac{\partial\psi}{\partial t_i}\right)^{T},\left(\delta_{j1},\ldots,\delta_{j(m-1)} ,\frac{\partial\psi}{\partial t_j}\right)^{T}\right\rangle=\delta_{ij}+\frac{\partial \psi}{\partial t_i}\frac{\partial \psi}{\partial t_j}$$ and hence it's inverse satisfies:
  \begin{equation}\label{sherman}\left(Id_{(m-1)\times(m-1)}+\nabla \psi \nabla \psi ^{T}\right)^{-1}=Id_{(m-1)\times(m-1)}-\frac{\nabla \psi \nabla \psi ^{T}}{1+\Vert \nabla \psi\Vert^2},\end{equation}
by the simplest form of the Sherman-Morrison formula for the inverse matrix.
 
If the hypersurface is locally  (in some neighborhood $V$ of $w$) implicitly given as the set of solutions $$M\supset\{ (x_1,\ldots,x_{m})= x\in V\subset \mathbb R^{m} | F(x)=0\}$$ of some equation $F=0$, with $w\in M$ and $\nabla F(w)\neq 0$ then 
the Gauss map is:
$$\nu(w)=-\frac{\nabla F(w)}{\Vert \nabla F(w)\Vert}.$$ 
Here we assume that the ''inside'' of the hypersurface is where $F<0$.
The second fundamental form reads:
\begin{align}\mathrm{I\!I}(w)(X,Y)=&-\langle d \nu(w) (X),Y\rangle=\sum_{i=1}^{m}\sum_{j=1}^{m}\frac{\frac{\partial^2 F }{\partial x_{j}\partial x_{i}}}{\Vert \nabla F(w) \Vert} X_{j}Y_{i}- \sum_{i=1}^{m}\sum_{j=1}^{m}\frac{\frac{\partial\Vert\nabla F\Vert}{\partial x_{j}} \frac{\partial F }{\partial x_{i}}}{\Vert \nabla F(w) \Vert^2} X_{j}Y_{i}\nonumber \\
=&\frac{\mathcal H(F(w))(X,Y)}{\Vert \nabla F(w)\Vert}\label{finally1}\end{align}
because $$\sum_{i=1}^{m}\sum_{j=1}^{m}\frac{\frac{\partial\Vert\nabla F\Vert}{\partial x_{j}} \frac{\partial F }{\partial x_{i}}}{\Vert \nabla F(w) \Vert^2} X_{j}Y_{i}=\left(\sum_{j=1}^{m}\frac{\frac{\partial\Vert\nabla F\Vert}{\partial x_{j}} X_{j}}{\Vert \nabla F(w) \Vert^2}\right)\langle \nabla F(w),Y\rangle=0,$$
	as $Y$ is a tangent vector.
	
	To compute the coefficients of the first fundamental form we assume $\frac{\partial F}{\partial x_m}\neq 0$ and try to solve  $F=0$ for the last variable, that is to find $\psi$ such that $F(x_1,\ldots,x_{m-1},\psi(x_1,\ldots,x_{m-1}))=0$.The implicit function theorem gives $\frac{\partial \psi}{\partial x_j}=-\frac{\frac{\partial F}{\partial x_j}}{ \frac{\partial F}{\partial x_m}}$. Thus, as above, the coefficients of the first fundamental form in the basis $\frac{\partial \varphi}{\partial t_1},\cdots,\frac{\partial \varphi}{\partial t_{m-1}}$ are $$\mathrm{I}_{ij}=\left\langle \left(\delta_{i1},\ldots,\delta_{i(m-1)} ,\frac{\partial\psi}{\partial t_i}\right)^{T},\left(\delta_{j1},\ldots,\delta_{j(m-1)} ,\frac{\partial\psi}{\partial t_j}\right)^{T}\right\rangle=\delta_{ij}+\frac{\partial \psi}{\partial t_i}\frac{\partial \psi}{\partial t_j}=\delta_{ij}+\frac{\frac{\partial F}{\partial x_i}}{ \frac{\partial F}{\partial x_m}}\frac{\frac{\partial F}{\partial x_j}}{ \frac{\partial F}{\partial x_m}}$$ and hence it's inverse satisfies
\begin{align}\left(Id_{(m-1)\times(m-1)}+\nabla \psi \nabla \psi ^{T}\right)^{-1}=&Id_{(m-1)\times(m-1)}-\frac{\nabla \psi \nabla \psi ^{T}}{1+\Vert \nabla \psi\Vert^2}\nonumber\\=&Id_{(m-1)\times(m-1)}-\frac{\left(\frac{\partial F}{\partial x_1},\cdots,\frac{\partial F}{\partial x_{m-1}}\right)\left(\frac{\partial F}{\partial x_1},\cdots,\frac{\partial F}{\partial x_{m-1}}\right)^{T}}{\Vert \nabla F\Vert^2},\label{sherman1}\end{align}
The principal curvatures do not depend on the choice of a particular $F$ which defines the hypersurface locally.

\begin{defi}
One defines the {\bf mean curvature} at the point $w\in M$ as the average $$H=H(w):=\frac{\kappa_1+\kappa_2+\cdots+\kappa_{m-1}}{m-1}.$$
\end{defi}
The definition is extrinsic, meaning that it depends on how $M$ is embedded in $\mathbb R^{m}$ and is not invariant with respect to smooth diffeomorphisms of the ambient space. For example, a scaling by a factor $r$ results in $H$ transforming to $\frac{1}{r}H$. Moreover, 
one has to specify  which side of the hypersurface is outer. 

From linear algebra we know that the sum of the eigenvalues is the trace of a matrix, so
$$H(w)=\frac{\kappa_1+\kappa_2+\cdots+\kappa_{m-1}}{m-1}=\frac{1}{m-1}\tr A(w)=\frac{1}{m-1}\tr B(w).$$
As above, if the hypersurface is locally parameterized by the graph of a $C^2$ function $\psi$   
$$\mathbb R^{m-1}\supset U\ni (t_1,\ldots,t_{m-1})=t\to \varphi(t)=(t_1,\ldots,t_{m-1},\psi(t_1,\ldots,t_{m-1}))\in M\subset \mathbb R^{m}$$
 then by the definition of $B(w)$, \eqref{finally} and \eqref{sherman} we have \begin{align} H(w)=&\frac{1}{m-1}\tr B(w)=\frac{1}{m-1}\tr \left(Id_{(m-1)\times(m-1)}-\frac{\nabla \psi \nabla \psi ^{T}}{1+\Vert \nabla \psi\Vert^2}\right)\frac{\mathcal H (\psi(t))}{\sqrt{{1+ \Vert \nabla \psi(t) \Vert^2}}}\nonumber \\
 	=&\frac{1}{m-1}\frac{{\displaystyle\sum_{i=1}^{m-1}\sum_{j=1}^{m-1}}\left(\delta_{ij}-\frac{\frac{\partial \psi}{\partial t_i}\frac{\partial \psi}{\partial t_j}}{1+\Vert \nabla \psi \Vert^2}\right)\frac{\partial^2\psi}{\partial t_i\partial t_j}}{\sqrt{1+\Vert \nabla \psi \Vert^2}}.  \label{meancurv}\end{align}
 If further the direction of $\nu(w)$ coincides with the $m$-th axis, that is if $\nabla \psi=0$ at $\varphi^{-1}(w)$, then \eqref{meancurv} simplifies to $H(w)=\Delta \psi(t)$.

 If the hypersurface is locally implicitly given as the set of solutions $$M\supset\{ (x_1,\ldots,x_{m})= x\in V\subset \mathbb R^{m} | F(x)=0\}$$ of some equation $F=0$, with $w\in M$ and $\nabla F(w)\neq 0$ we can simplify the calculations by fixing an orthonormal basis $T_1,\ldots,T_{m-1}$ of the tangent space $T_wM$. Then $T_1,\ldots,T_{m-1}, \nu(w)$ is an orthonormal basis of $\mathbb R^{m}$ and hence by the definition of $A(w)$ and \eqref{finally1} we have
 \begin{align}H(w)=&\frac{1}{m-1}\tr A(w)=\frac{1}{m-1}\sum_{j=1}^{m-1}\mathrm{I\!I}(w)(T_j,T_j)=\frac{1}{m-1}\sum_{j=1}^{m-1}\frac{\mathcal H(F(w))(T_j,T_j)}{\Vert \nabla F(w)\Vert}\nonumber\\=&\frac{1}{m-1}\left(\tr\frac{\mathcal H(F(w))}{\Vert \nabla F(w)\Vert} -\frac{\mathcal H(F(w))(\nu(w),\nu(w))}{\Vert \nabla F(w)\Vert} \right)\nonumber\\=&\frac{1}{m-1}\left( \frac{\Delta F(w)}{\Vert \nabla F(w)\Vert}-\frac{\frac{-\nabla F(w)}{\Vert \nabla F(w)\Vert}^{T} \mathcal H(F(w))\frac{-\nabla F(w)}{\Vert \nabla F(w)\Vert}}{\Vert \nabla F(w)\Vert}\right)\nonumber\\
 =&\frac{1}{m-1}\frac{\nabla F(w)^{T} ( (\Delta F(w))Id_{m\times m}-\mathcal H (F(w)))\nabla F(w)}{\Vert \nabla F(w)\Vert^3}  \label{implicitform}.\end{align}
 
 We used the fact that if $Q$ is the matrix of the orthogonal transformation sending the $j$-th vector $e_j$ of the standard basis of $\mathbb R^{m}$ to the $j$-th vector of the basis $T_1,\ldots,T_{m-1}, \nu(w)$ then $Q^{T}=Q^{-1}$ by orthogonality and
 $$\left(\sum_{j=1}^{m-1} T_j^{T}\mathcal H(F(w)) T_j\right)+\nu(w)^{T}\mathcal H(F(w)) \nu(w)=\sum_{j=1}^{m}e_j^{T}Q^{T}\mathcal H(F(w))Qe_j$$$$=\sum_{j=1}^{m}e_j^{T}Q^{-1}\mathcal H(F(w))Qe_j=\tr Q^{-1}\mathcal H(F(w))Q =\tr \mathcal H(F(w)),$$
 as the traces of similar matrices are equal.
 
 By noticing that if one has a graph parameterization $$(t_1,\ldots,t_{m-1})=t\to \varphi(t)=(t_1,\ldots,t_{m-1},\psi(t_1,\ldots,t_{m-1}))$$ then $F$ can be chosen as $F(x_1,\ldots,x_m)=\psi(x_1,\ldots,x_{m-1})-x_{m}$ one can obtain \eqref{implicitform} directly from \eqref{meancurv} and vice versa.

The notions of principal curvatures and mean curvature can be generalized to higher codimensional submanifolds of $\mathbb R^{m}$ and to submanifolds of Riemannian manifolds. For more on these matters see \cite{D}, which is one of the few places where the above formulas are derived explicitly.
	\section{Proof of the main result and of Theorem \ref{sh}}
	The ''if'' part of Oka's lemma (Theorem \ref{oka}) is well known and does not need separate treatment. It follows that $-\log(d_{\pa\Om_j}(z))$ is a continuous plurisubharmonic exhaustion function, the existence of which guarantees the holomorphic convexity. 
	The proof of the ''only if part'', as well as the considerations on subharmonicity properties, will be divided into several steps:
	
	{\bf Step 1: Reduction to the case of a bounded domain with smooth boundary.} 
	
	As this is completely standard we shall be brief. Fix a domain of holomorphy $\Om\subsetneq\C$. Recall that this implies that $\Om$ is holomorphically convex, which in turn implies that there is a smooth strictly plurisubharmonic exhaustion function $\psi$ on $\Om$ (see \cite{K}). By Sard's theorem there is a sequence $\lbrace t_j\rbrace_{j=1}^\infty\subset\mathbb R$, $t_j\nearrow\infty$ such that 
the domains $\Om_j:=\lbrace z\in\Om\ |\ \psi(z)<t_j\rbrace$ are bounded and their boundary is $C^{\infty}$ smooth. As $\psi$ is plurisubharmonic, the domains $\Om_j$ are also Levi pseudoconvex. Of course, they are even strictly pseudoconvex but we shall not use this fact later on. 

It now suffices to prove that $-\log(d_{\pa\Om_j}(z))$ is plurisubharmonic in $\Om_j$ for each $j$ as $-\log(d_{\pa\Om}(z))$ will then be the pointwise decreasing limit of the (plurisubharmonic) functions $-\log(d_{\pa\Om_j}(z))$ and will be hence plurisubharmonic. We fix $j$ in Step 2 and for notational brevity we suppress this indice.

The subharmonic counterpart of the above reasoning is not so widely known. It is a theorem of Greene and Wu (see \cite{GW}) that any connected non-compact Riemannian manifold allows a smooth strongly subharmonic exhaustion function. Just take the manifold to be any fixed domain $\Omega\subsetneq\mathbb R^{n}$ and the metric to be the Euclidean one. There is no completeness requirement. The rest is as above, just read subharmonic instead of plurisubharmonic.

\begin{rem}\label{shsublevel} In the subharmonic case we will not use the fact that a domain is a sublevel set of a subharmonic function, but just that it has a smooth boundary. Moreover, the former domains do not seem to exhibit clear distinctive geometric features. For example the hyperboloid $\left\{\frac {x^2}{a^2}+ \frac{y^2}{a^2}-\frac{ z^2}{c^2}= d\right\}\subset \mathbb R^3, \frac{2}{a^2}>\frac{1}{c^2}$ has either positive or negative Gaussian curvature, depending on the choice of $d$, and if $d>0$ the mean curvature is positive in some regions and negative in other.  
	\end{rem}  

{\bf Step 2: Smooth analysis on a smoothly bounded domain $\Omega$.}

We shall prove the following claim (compare with Theorem \ref{smooth}):

{\bf Claim:} Let $\Om$ be a smoothly bounded Levi pseudoconvex domain in $\C$. If $-\log(d_{\pa\Om}(z))$ is smooth near $p\in\Om$ then it is plurisubharmonic near $p$ in the sense that the Levi form is semi-positive definite on the whole $\C$.

Note that the claim implies that $-\log(d_{\pa\Om}(z))$ is plurisubharmonic in $\Om\setminus\Sigma$.

Essentially, the claim follows from the reasoning in \cite{HM} once one realizes that the analysis there holds in $\Om\setminus\Sigma$ and not only in a small collar around $\pa\Om$.	We provide an alternative proof for the sake of completeness.

From Lemma \ref{GiTr} we know the full real Hessian of $d_{\pa\Om}$ 
$$  \frac{\pa^2 d_{\pa\Om}}{\pa t_i\pa t_j}(0',t_{2n})=\begin{pmatrix}
	{ \frac{-\kappa_{i}}{1-t_{2n}\kappa_{i}}\delta_{ij}}&\begin{matrix}0 \\ \vdots \end{matrix} \\ 
	
	\begin{matrix}0 & \cdots \end{matrix} &0	
\end{pmatrix}_{i,j=1}^{2n}$$
and hence the problem reduces to a computation. The main issue is that the real coordinates $(t_1,\cdots,t_{2n})$
need not cohere well with the ambient complex coordinates.

Fix a point  $p$ in $\Omega\setminus\Sigma$ so that there is a unique $q\in\pa\Om$ such that
$d_{\pa\Om}(p)=\Vert p-q\Vert $. Changing the coordinates near $q$ as in Lemma \ref{GiTr} we identify $q$ with the coordinate origin. In these coordinates we have $p=(0',d_{\pa\Om}(p))$.

Let $J$ denote the (standard) complex structure operator computed in the basis $\frac{\pa}{\pa t_j},\ j=1,\cdots,2n$. Then, given $N=\sum_{j=1}^{n}n_j\frac{\pa}{\pa t_{2j}}$\ ($n_j\in\mathbb R$), the vector $Z=N-iJ(N)$ is a complex vector of type $(1,0)$ with respect to $J$. The following formula for any $C^2$ function $u$ is well-known (see \cite{HM}):
\begin{equation}\label{1j}
\mathcal L(u(p))(Z,Z)=\frac14(\mathcal H(u(p))(N,N)+\mathcal H(u(p))(J(N),J(N))),
\end{equation}
where $\mathcal H$ is the real $2n$-dimensional Hessian of $u$.

Specializing to $-\log(d_{\pa\Om})$ we compute at $p$
\begin{align}\label{spec}
	\mathcal H(-\log(d_{\pa\Om})(p))(X,X)=&-\frac{\mathcal H(d_{\pa\Om}(p))(X,X)}{d_{\pa\Om}}+\frac{|\langle \nabla d_{\pa\Om},X\rangle|^2}{(d_{\pa\Om})^2}\\
	  =&X^T\begin{pmatrix}
\frac{\kappa_{i}}{d_{\pa\Om}(p)(1-d_{\pa\Om}(p)\kappa_{i})}\delta_{ij}&\begin{matrix}0\\\vdots\\0\end{matrix}\\
\begin{matrix}0&&\cdots&& 0\end{matrix}&\frac{1}{(d_{\pa{\Om}}(p))^2}
\end{pmatrix}X.\nonumber\end{align}

In order to exploit the Levi pseudoconvexity of $\pa\Om$ we have to specify the complex tangent directions at $q$. To this end define the vector $X:=-J\left(\frac{\pa}{\pa t_{2n}}\right)$ (so that $J(X)=\frac{\pa}{\pa t_{2n}}$ is the inner normal at $q$). Fix any vector $W$ which is orthogonal to both $X$ and $J(X)$. Then both $W$ and $J(W)$ belong to $T_q\pa\Om$ and $V:=W-iJ(W)$ is a complex tangent vector of type $(1,0)$. Such vectors span the complex tangent space at $q$ and together with $Z=X-iJ(X)$ they span the whole $\C$.

Any complex vector of type $(1,0)$ can then be written as $V+\alpha Z$ with $W$ and $X$ as above, $\alpha\in \mathbb C$. Multiplying the vector  by $\overline{\alpha}$, which does not affect the sign of the Levi form in direction $V+\alpha Z$ neither the way $V$ is obtained, we can assume that the coefficient in front of $Z$ and hence in front of $X$ (still called $\alpha$) is real. In conclusion, it remains to check the positivity of
\begin{align}\label{hesplushes}
A:=&\mathcal H(-\log(d_{\pa\Om})(p))(W+\alpha X,W+\alpha X)\\
&+\mathcal H(-\log(d_{\pa\Om})(p))(J(W)+\alpha J(X),J(W)+\alpha J(X)),\nonumber\end{align}
where $W=\sum_{s=1}^{2n-1}w_s\frac{\pa}{\pa t_s}$, $J(W)=\sum_{s=1}^{2n-1}\omega_s\frac{\pa}{\pa t_s}$ and $X=\sum_{s=1}^{2n-1}\chi_s\frac{\pa}{\pa t_s}$, $J(X)=\frac{\pa}{\pa t_{2n}}$.

Recalling now (\ref{spec}), the expression (\ref{hesplushes}) becomes
\begin{equation*}
A=\sum_{s=1}^{2n-1}\frac{(w_s^2+\omega_s^2+\alpha^2\chi_s^2+2\alpha w_s\chi_s)\kappa_s}{d_{\pa\Om}(p)(1-d_{\pa\Om}(p)\kappa_{s})}+\frac{\alpha^2}{(d_{\pa\Om}(p))^2}
\end{equation*}
(note that $\mathcal H(d_{\pa\Om}(p))(J(W),J(X))$ vanishes). Exploiting the fact that $\sum_{s=1}^{2n-1}\chi_s^2=1$ and completing the squares we obtain

\begin{align*}
A&=\sum_{s=1}^{2n-1}\frac{(w_s^2+\omega_s^2+\alpha^2\chi_s^2+2\alpha w_s\chi_s)\kappa_s}{d_{\pa\Om}(p)(1-d_{\pa\Om}(p)\kappa_{s})}+\sum_{s=1}^{2n-1}\frac{\alpha^2\chi_s^2}{(d_{\pa\Om}(p))^2}	\\
&=\sum_{s=1}^{2n-1}\frac{(w_s^2+\omega_s^2)\kappa_s}{d_{\pa\Om}(p)(1-d_{\pa\Om}(p)\kappa_{s})}+\sum_{s=1}^{2n-1}\left[\frac{\alpha^2\chi_s^2}{(d_{\pa\Om}(p))^2(1-d_{\pa\Om}(p)\kappa_{s})}+\frac{2\alpha w_s\chi_s\kappa_s}{d_{\pa\Om}(p)(1-d_{\pa\Om}(p)\kappa_{s})}\right]\\
&\geq\sum_{s=1}^{2n-1}\frac{(w_s^2+\omega_s^2)\kappa_s}{d_{\pa\Om}(p)(1-d_{\pa\Om}(p)\kappa_{s})}-\sum_{s=1}^{2n-1}\frac{w_s^2\kappa_s^2}{1-d_{\pa\Om}(p)\kappa_{s}}=\sum_{s=1}^{2n-1}\frac{\omega_s^2\kappa_s}{d_{\pa\Om}(p)(1-d_{\pa\Om}(p)\kappa_{s})}\\
&+\sum_{s=1}^{2n-1}\frac{w_s^2\kappa_s}{d_{\pa\Om}(p)}.
\end{align*}

Observe now the crucial fact: the elementary inequality $\frac{\kappa_s}{1-d_{\pa\Om}(p)\kappa_s}\geq \kappa_s$ holds regardless of the sign of $\kappa_s$ (we learned this trick from \cite{Fo}). Hence, $A$ is further bounded from below by
$$\sum_{s=1}^{2n-1}\frac{(w_s^2+\omega_s^2)\kappa_s}{d_{\pa\Om}(p)}.$$
The latter expression is easily seen to be equal to 
$$\frac{\mathcal H((h(t')-t_{2n})(0',0))(W,W)+\mathcal H((h(t')-t_{2n})(0',0))(J(W),J(W))}{d_{\pa\Om}(p)}$$
$$=4\frac{\mathcal L((h(t')-t_{2n})(0',0))(V,V)}{d_{\pa\Om}(p)}.$$

As $h(t')-t_{2n}$ is a local defining function for $\pa\Om$, the last expression is non-negative from the very definition of Levi pseudoconvexity. This finishes the proof of the Claim.

The subharmonic counterpart is much easier. By the same computation as above, and because the Laplacian is the trace of the Hessian one has:

$$\Delta (-\log(d_{\pa\Om}(x)))=\frac{1}{(d_{\pa\Om}(x))^2}+\sum_{i=1}^{m-1}\frac{\kappa_{i}}{d_{\pa\Om}(x)(1-d_{\pa\Om}(x)\kappa_{i})}$$ 
and
$$\Delta (d_{\pa\Om}(x))^{2-m}=\frac{(m-2)(m-1)}{(d_{\pa\Om}(x))^m}+(m-2)\sum_{i=1}^{m-1}\frac{\kappa_{i}}{(d_{\pa\Om}(x))^{m-1}(1-d_{\pa\Om}(x)\kappa_{i})}.$$

 Recall that $1-d_{\pa\Om}(x)\kappa_{i}>0$ for any $x\in\Omega\setminus\Sigma$ and any $i$ (see also Lemma 2.2 in \cite{LLL}). Observe that the function $x\to \frac{x}{1-dx}$ is convex for $x\in\left(-\infty, \frac{1}{d}\right)$, as its second derivative is $\frac{2d}{(1-dx)^3}> 0$. Thus, by the Jensen inequality for convex functions, one has

$$\Delta (-\log(d_{\pa\Om}(x)))\geq\frac{1}{(d_{\pa\Om}(x))^2}+(m-1)\frac{\frac{\sum_{i=1}^{m-1}\kappa_{i}}{m-1}}{d_{\pa\Om}(x)\left(1-d_{\pa\Om}(x)\frac{\sum_{i=1}^{m-1}\kappa_{i}}{m-1}\right)}$$
$$=\frac{1}{(d_{\pa\Om}(x))^2}+\frac{(m-1)\sum_{i=1}^{m-1}\kappa_{i}}{d_{\pa\Om}(x)\left(m-1-d_{\pa\Om}(x)\sum_{i=1}^{m-1}\kappa_{i}\right)}=\frac{m-1+(m-2)d_{\pa\Om}(x)\sum_{i=1}^{m-1}\kappa_{i}}{(d_{\pa\Om}(x))^2\left(m-1-d_{\pa\Om}(x)\sum_{i=1}^{m-1}\kappa_{i}\right)}$$
and respectively
\begin{align*}\Delta (d_{\pa\Om}(x))^{2-m}&\geq\frac{(m-2)(m-1)}{(d_{\pa\Om}(x))^m}+(m-2)(m-1)\frac{\frac{\sum_{i=1}^{m-1}\kappa_{i}}{m-1}}{(d_{\pa\Om}(x))^{m-1}\left(1-d_{\pa\Om}(x)\frac{\sum_{i=1}^{m-1}\kappa_{i}}{m-1}\right)}\\
&=(m-2)(m-1)\left(\frac{1}{(d_{\pa\Om}(x))^m}+\frac{\sum_{i=1}^{m-1}\kappa_{i}}{(d_{\pa\Om}(x))^{m-1}\left(m-1-d_{\pa\Om}(x)\sum_{i=1}^{m-1}\kappa_{i}\right)}\right)\\
&=\frac{(m-2)(m-1)^2}{(d_{\pa\Om}(x))^m\left(m-1-d_{\pa\Om}(x)\sum_{i=1}^{m-1}\kappa_{i}\right)}>0.\end{align*}

In the former case, the subharmonicity is reduced to the non-negativity of $$m-1+(m-2)d_{\pa\Om}(x)\sum_{i=1}^{m-1}\kappa_{i}=(m-1)(1+(m-2)d_{\pa\Om}(x)H).$$
This is positive near the boundary. If $H$ is non-negative then the subharmonicity follows. If $H<0$ then the subharmonicity holds if  $d_{\pa\Om}(x)\leq\frac{1}{(m-2)|H|}$. Observe also that if $R$ is the inradius of $\Omega$ and $H\geq \frac{-1}{(m-2)R}$ then $1+(m-2)d_{\pa\Om}(x)H\geq 1-\frac{d_{\pa\Om}(x)}{R}>0$.

Finally, to deal with the smoothly bounded pseudoconvex case we assume Theorem \ref{geometric property}, where without loss of generality the smallest eigenvalue is $\kappa_{2n-1}$, and  split the sum
\begin{align*}\Delta (-\log(d_{\pa\Om}(x)))&=\frac{1}{(d_{\pa\Om}(x))^2}+\sum_{i=1}^{2n-1}\frac{\kappa_{i}}{d_{\pa\Om}(x)(1-d_{\pa\Om}(x)\kappa_{i})}\\
	&=\frac{1}{(d_{\pa\Om}(x))^2}+\frac{\kappa_{2n-1}}{d_{\pa\Om}(x)(1-d_{\pa\Om}(x)\kappa_{2n-1})}+\sum_{i=1}^{2n-2}\frac{\kappa_{i}}{d_{\pa\Om}(x)(1-d_{\pa\Om}(x)\kappa_{i})}\\
&\geq \frac{1-d_{\pa\Om}(x)\kappa_{2n-1}+d_{\pa\Om}(x)\kappa_{2n-1}}{d_{\pa\Om}(x)(1-d_{\pa\Om}(x)\kappa_{2n-1})}+
(2n-2)\frac{\frac{\sum_{i=1}^{2n-2}\kappa_{i}}{2n-2}}{d_{\pa\Om}(x)\left(1-d_{\pa\Om}(x)\frac{\sum_{i=1}^{2n-2}\kappa_{i}}{2n-2}\right)}\\
&\geq \frac{1}{d_{\pa\Om}(x)(1-d_{\pa\Om}(x)\kappa_{2n-1})}>0, \end{align*}
by the Jensen inequality.

\begin{rem} We note that the inequality
$$\sum_{i=1}^{m-1}\frac{\kappa_{i}}{1-d_{\pa\Om}(x)\kappa_{i}}\geq (m-1)\frac{\frac{\sum_{i=1}^{m-1}\kappa_{i}}{m-1}}{1-d_{\pa\Om}(x)\frac{\sum_{i=1}^{m-1}\kappa_{i}}{m-1}} $$
is presented as a much deeper fact in \cite{LLL}, see Proposition 2.6 there and Proposition 2.5.3 in \cite{BEL}, for the proof of which one needs the Newton inequality for elementary symmetric polynomials.\end{rem}

{\bf Step 3: Non-smooth analysis on $\Om$.} 

It remains to prove that $-\log(d_{\pa\Om}(\cdot))$ extends (pluri)subharmonically past $\Sigma$ for each $\Om=\Om_j$. As $\pa\Om$ is now smooth it follows from Lemma \ref{CrMa} that $\Sigma$ is of zero Lebesgue measure.  We start with the subharmonic case.

We observe that
$$-\log(d_{\pa\Om}(x))=\sup_{ w\in\pa\Om} -\log\Vert x-w\Vert.$$
The functions $x\to-\log\Vert x-w\Vert$ are not subharmonic for $m>2$, yet the following lower bound for their Laplacian is available 
$$-\Delta\log\Vert x-w\Vert=-\sum_{k=1}^{m}\frac{\partial^2 \frac{1}{2}\log\Vert x-w\Vert^2}{\partial x_k\partial  x_{k}}=-\sum_{k=1}^{m}\frac{\Vert x-w\Vert^2-2(x_k-w_k)^2}{\Vert x-w\Vert^4}= \frac{-(m-2)}{\Vert x-w\Vert^2}.$$
So, $$\overline{\Delta} (-\log\Vert x-w\Vert)=\Delta (-\log\Vert x-w\Vert)\geq \frac{-(m-2)}{d_{\pa\Om}(x)}$$ for any $w\in\pa \Omega$. Then, by Lemma \ref{supf}, the same estimate holds for the regularized pointwise supremum:  $\overline{\Delta} (-\log(d_{\pa\Om}(x)))^{\ast}\geq \frac{-(m-2)}{d_{\pa\Om}(x)}$. As the pointwise supremum is continuous there is no need to take the regularization and  so $\overline{\Delta} (-\log(d_{\pa\Om}(x)))\geq \frac{-(m-2)}{d_{\pa\Om}(x)}>-\infty$ everywhere in $\Omega$. 
 Also, by the subharmonicity of $-\log(d_{\pa\Om}(x))$ on $\Omega\setminus\Sigma$ established in Step 2, one has $\overline{\Delta} (-\log(d_{\pa\Om}(x)))\geq 0$ there, that is, almost everywhere.
  By  the theorem of Privalov, $-\log(d_{\pa\Om}(x))$ is subharmonic on $\Omega$. When we consider $(d_{\pa\Om}(x))^{2-m}$ the proof is the same: $(d_{\pa\Om}(x))^{2-m}=\max_{w\in\pa\Omega}\Vert x-w\Vert^{2-m}$ and $\Delta \Vert x-w\Vert^{2-m}=0$.

  The proof in the subharmonic case is over. We now take the plurisubharmonic one. Since $\Sigma$ is of zero Lebesgue measure, $-\log(d_{\pa\Om}(z))$ is plurisubharmonic on $\Om\setminus\Sigma$ by Step 2, and subharmonic in $\Om$ by Step 3 above, we invoke now Theorem \ref{didi} to directly obtain the claimed plurisubharmonicity of $-\log(d_{\pa\Om}(z))$ on the whole $\Om$.
 
 \section{Proof of Theorem \ref{meanconvex}}
 
 This is now very brief.   Observe that
$\overline\Delta(-d_{\pa\Om}(x))\geq \frac{-(m-1)}{d_{\pa\Om}(x)}$ because  $\Delta(-\Vert x-w\Vert)=\frac{-(m-1)}{\Vert x-w\Vert}>\frac{-(m-1)}{d_{\pa\Om}(x)}$ for $w\in\pa\Omega$ and the reasoning in Step 3 above can be repeated. By assumption $-d_{\pa\Om}(x)$ is subharmonic on $\Omega\setminus \Sigma$. Hence, $-d_{\pa\Om}(x)$ is subharmonic in $\Omega$ by the Privalov theorem. Thus, $-d_{\pa\Om}(x)$ being subharmonic on $\Omega\setminus \Sigma$ and on the whole $\Omega$ is the same, provided that $\Sigma$ is a Lebesgue null set.
 
\section{Proof of Theorem \ref{geometric property}}
Essentially, the proof boils down to linear algebra.

First, we have to specify what will be called ''the real part of a complex hyperplane''. Note that given a complex vector space with a fixed complex structure $J$ there is no natural or canonical choice of a splitting into the real and imaginary parts - it is up to a choice of an antilinear involution $v\to c(v)=\overline{v}$. However, once  such a splitting is fixed once can induce a compatible splitting on any complex hyperplane. To see this, let $V$ be a complex vector space with $\dim_{\mathbb C}V=n$, $J$ be the complex structure, $Re V$ and $Im V =J(Re V)$ be the splitting, so that $V=Re V\oplus Im V$  and $\dim_{\mathbb R} Re V= \dim_{\mathbb R}Im V=n$. Let $H$ be a complex hyperplane in $V$. From
$$n+2n-2=\dim_{\mathbb R}Re V+ \dim_{\mathbb R} H=\dim_{\mathbb R} Re V\cap H + \dim_{\mathbb R} Re V + H\leq \dim_{\mathbb R} Re V\cap H +2n  $$ 
we see that $\dim_{\mathbb R} Re V\cap H$ is either $n, n-1$ or $n-2$. The first possibility is ruled out as there is a linear bijection  $ Re V\cap H\ni v\to J(v)\in Im V\cap H$, meaning that $ Im V\cap H= Im V$ and hence $H= V$, a contradiction. So there is a $v\in  Re V\setminus H$. But now $J(v)\in Im V\setminus H$ and hence $J(v)\not \in Re V + H$. So, $\dim_{\mathbb R} Re V + H\leq 2n-1$ and $\dim_{\mathbb R} Re V\cap H\neq n-2$ either. Thus, $\dim_{\mathbb R} Re V\cap H=n-1$ and we define this subspace to be the real part of $H$.

As $\Omega$ is a pseudoconvex domain in $\C$ we first split (say in the standard way) $\C= Re\, \C\oplus Im\,\C$ and for any tangent complex hyperplane we define its real part accordingly.
 
The setting is now as in Step 2 above. Let $U\subset \mathbb R^{2n-1}=\mathop{span} \left\{\frac{\partial}{\partial t_1},\cdots,\frac{\partial}{\partial t_{2n-1}} \right\}$ be the real real part of the complex tangent space at $q$. This subspace is perpendicular to $X=-J\left(\frac{\pa}{\pa t_{2n}}\right)$  and for any vector $W\in U$,  $W-iJ(W)$ is a complex vector of type $(1,0)$ which is tangent to $\partial\Omega$ at $q$. Likewise, $J(U)$ is the imaginary part of the complex tangent space at $q$.  As above, $\dim_{\mathbb R} U= \dim_{\mathbb R} J(U)=n-1$ and $\dim_{\mathbb R} U\oplus J(U)=2n-2$ since $J$ is an orthogonal transformation.

We put the matrix $K$ to be $K:=\begin{pmatrix}\kappa_{1}& 0 &\cdots & 0\\
	0&\kappa_{2}&\cdots&0\\
	\vdots&\vdots&\ddots &\vdots\\
	0&0&\cdots&\kappa_{2n-1}
\end{pmatrix}$. 
Note that at the end of the proof of the Claim in Step 2 above we obtained the inequality
$$\sum_{s=1}^{2n-1}\frac{(w_s^2+\omega_s^2)\kappa_s}{d_{\pa\Om}(p)}\geq 0.$$
Disregarding the denominator, the latter is equivalent to (in matrix notation):

\begin{equation}\label{levi1}\begin{pmatrix}W^{T} & J(W)^{T}\end{pmatrix}\begin{pmatrix}K& \bigzero_{(2n-1)\times(2n-1)}\\[1em]
		\bigzero_{(2n-1)\times(2n-1)} & K
	\end{pmatrix}\begin{pmatrix} W\\[1em] J(W)\end{pmatrix}\geq 0\end{equation}
and
\begin{equation}\label{levi2}\begin{pmatrix}J(W)^{T} & W^{T}\end{pmatrix}\begin{pmatrix}K& \bigzero_{(2n-1)\times(2n-1)}\\[1em]
		\bigzero_{(2n-1)\times(2n-1)} & K
	\end{pmatrix}\begin{pmatrix} J(W)\\[1em] W\end{pmatrix}\geq 0\end{equation}
for any $W\in U$, and where by abusing notation $J$ is the restriction of the complex structure to $\{X, JX\}^{\perp}\subset \mathbb R^{2n-1}$.

We use the following standard fact from linear algebra (a real version of the so-called Ky Fan maximum principle): for a symmetric real   $m\times m$ matrix $A$ the sum of the $j\leq m$ greatest eigenvalues of $A$ is equal to $\max_{B}\tr B^{T}AB$ where the maximum is taken over all real matrices $B$ with $m$ rows and $j$ columns such that $B^{T}B$ is the $j\times j$ identity matrix.

If $e_1,\ldots e_{n-1}$ is any orthonormal basis of $U$  it follows from the block structure of the matrix that
$$2\max_{j\in\{1,\ldots, 2n-1\}}\kappa_1+\cdots+\kappa_{j-1}+\hat{\kappa}_j+\kappa_{j+1}+\cdots+\kappa_{2n-1}$$$$\geq \tr B^{T}\begin{pmatrix}K& \bigzero_{(2n-1)\times(2n-1)}\\[1em]
	\bigzero_{(2n-1)\times(2n-1)} & K
\end{pmatrix}B $$
where $$B= \underbrace{ \begin{pmatrix} e_1 &\cdots& e_{n-1}&J(e_1)& \cdots &J(e_{n-1})\\[1em] J(e_1)& \cdots &J(e_{n-1})&e_1 &\cdots& e_{n-1}\end{pmatrix}}_{2n-2 \text{ columns }}\left.\begin{matrix}\\[2.5em]\end{matrix}\right\}{\scriptstyle\scriptsize 4n-2\text{ rows }}  .$$
But from \eqref{levi1}, \eqref{levi2} all the diagonal entries of the product matrix above are non-negative (positive if the Levi form is non-degenerate on the corresponding vector $e_s-iJ(e_s)$). Hence, we get the first part of the result.

Also, the matrix $\begin{pmatrix}K& \bigzero_{(2n-1)\times(2n-1)}\\[1em]
	\bigzero_{(2n-1)\times(2n-1)} & K
\end{pmatrix}$, or rather the real bilinear form represented by it, is semi-positive definite when restricted to the following $2n-2$ dimensional subspace of $\mathbb R^{4n-2}$: $$\mathop{span}\left\{ \begin{pmatrix}
e_1\\[0.5em]J(e_1)
\end{pmatrix},\cdots,\begin{pmatrix}
e_{n-1}\\[0.5em]J(e_{n-1})
\end{pmatrix},\begin{pmatrix}
J(e_{1})\\[0.5em]e_{1}
\end{pmatrix},\cdots,\begin{pmatrix}
J(e_{n-1})\\[0.5em]e_{n-1}
\end{pmatrix} \right\}.$$
	It follows from linear algebra (or from the theory of Krein spaces) that at least $n-1$ of the eigenvalues, that is of the numbers $\kappa_1,\ldots, \kappa_{2n-1}$, are non-negative.
	In the strongly pseudoconvex case one has  positive definiteness instead of semi-positive definiteness above and hence positive eigenvalues.
	
	\begin{rem} Theorem \ref{geometric property} is sharp in the sense that one can find a pseudoconvex domain with exactly $n-1$ non-negative principal curvatures at some point of its boundary. Just take a  close enough approximation of the product of $n$ copies of the planar annulus $B(0,1)\setminus\overline{B(0,r)}$, which can be obtained from a smooth exhaustion. 
	\end{rem}
	\begin{rem}
		In the geometrically ideal situation, when the principal directions of the second fundamental form coincide with the coordinate axes with respect to the standard coordinates of $\C$, we can choose $W=\frac{\partial}{\partial t_{2s+1}},\, s=0,\ldots,n-2$ to obtain
		\begin{equation}\label{kappasum}\begin{cases}
				\kappa_1+\kappa_2&\geq 0\\
				\kappa_3+\kappa_4&\geq 0\\
				&\vdots\\
				\kappa_{2n-3}+\kappa_{2n-2}&\geq 0\\
			\end{cases}.\end{equation}
		The same observation  can be found in \cite{G}, p.24.
	\end{rem}
	
	\bibliographystyle{amsplain}

\begin{thebibliography}{10}
		
		\bibitem{BEL} A. Balinsky,  W. D.  Evans, and R. T. Lewis, \textit{The analysis and geometry of Hardy's inequality} Universitext. Springer, Cham, 2015. xv+263 pp.
		
 \bibitem{CM} G. Crasta, A. Malusa, \textit{
 The distance function from the boundary in a Minkowski space}, 
 Trans. Amer. Math. Soc. \textbf{359} (2007), no. 12, 5725-5759.
		
	\bibitem{DD} S. Dinew, \.Z. Dinew, \textit{ On a problem of Chirka}, Proc. Amer. Math. Soc. \textbf{150} (2022), no. 5, 2115-2119.	
	
	\bibitem{D} P. Dombrowski, \textit{Kr\"ummungsgr\"o{\ss}en gleichungsdefinierter Untermannigfaltigkeiten
	Riemannscher Mannigfaltigkeiten}. Math. Nachr. \textbf{38} (1968) no. 3, 133-180.
	
	\bibitem{E} P. Erd\"os, \textit{
	On the Hausdorff dimension of some sets in Euclidean space},
	Bull. Amer. Math. Soc. \textbf{52} (1946), 107-109. 
		
	\bibitem{Fe} H. Federer, \textit{Curvature measures}, Trans. Amer. Math. Soc. \textbf{93} (1959), 418-491. 	
		
	\bibitem{Fo} F. Forstneri\v{c},
	\textit{Every smoothly bounded $p$-convex domain in $\mathbb R^n$ admits a $p$-plurisubharmonic defining function},
	Bull. Sci. Math. \textbf{175} (2022), Paper No. 103100, 10 pp.
	
	\bibitem{GR} R. Gunning, H. Rossi, \textit{Analytic functions of several complex variables. Reprint of the 1965 original}. AMS Chelsea Publishing, Providence, RI, (2009) xiv+318 pp.
	
	\bibitem{GT} D. Gilbarg, N. Trudinger, \textit{Elliptic partial differential equations of second order} Grundl.  Math. Wiss., Vol. \textbf{224}. Springer-Verlag, Berlin-New York, (1977). x+401 pp. 
	
	
	\bibitem{GW} R. E. Greene,  H. Wu, 
	\textit{Embedding of open Riemannian manifolds by harmonic functions}
	Ann. Inst. Fourier \textbf{25} (1975), no. 1, vii, 215--235.
	
	\bibitem{G} M. Gromov, \textit{Sign and geometric meaning of curvature}	
	Seminario Mat. e. Fis. di Milano \textbf{61}, (1991) 9-123.
	 
	\bibitem{Ha} P. Haj\l asz, \textit{On an old theorem of Erd\"os about ambiguous locus}, Colloq. Math. \textbf{168} (2022), no. 2, 249-256.
	
	\bibitem{HM} A. Herbig, J. McNeal, \textit{ Oka's lemma, convexity, and intermediate positivity conditions}, Illinois J. Math. \textbf{56} (2012), no. 1, 195-211.
	
	\bibitem{KS} S. Kolasi{\'n}ski, M. Santilli, \textit{ Regularity of the distance function from arbitrary closed sets}, Math. Ann. \textbf{386} (2023), 735-777. 
	
	\bibitem{K} S. Krantz, \textit{Function theory of several complex variables}, 2nd ed., Wadsworth and
	Brooks/Cole Mathematics Series, Wadsworth and Brooks/Cole, Pacific Grove, CA, 1992.	
	
	\bibitem{KP} S. Krantz, H. Parks, \textit{Distance to $C^k$ hypersurfaces},
	J. Diff. Eq. \textbf{40} (1981), no. 1, 116-120. 
	
	\bibitem{LLL} R. T. Lewis, J. Li, and  Y. Li, 
	\textit{A geometric characterization of a sharp Hardy inequality},
	J. Funct. Anal. \textbf{262} (2012), no. 7, 3159--3185. 
	
	\bibitem{LN1}  Y. Li, L. Nirenberg, \textit{The Dirichlet problem for singularly perturbed elliptic equations}, Commun. Pure Appl. Math. \textbf{51}  (1998), 1445-1490.
	
	\bibitem{LN2} Y. Li, L. Nirenberg,  \textit{The distance function to the boundary, Finsler geometry, and the singular set of viscosity solutions of some Hamilton-Jacobi equations}, Commun. Pure Appl. Math. \textbf{58} (2005), 85-146.
	
	\bibitem{Ma} C. Mantegazza, \textit{Lecture notes on mean curvature flow}, Progress in Mathematics, \textbf{290}. Birkhäuser/Springer Basel AG, Basel, 2011. xii+166 pp.
	
	\bibitem{MM} C. Mantegazza, A. Mennucci, \textit{
	Hamilton-Jacobi equations and distance functions on Riemannian manifolds}, 
	Appl. Math. Optim. \textbf{47} (2003), no. 1, 1-25. 
	
	\bibitem{P} A. V. Pokrovskiĭ, \textit{
		Conditions of subharmonicity and subharmonic extensions of functions}, (Russian. Russian summary)
	Mat. Sb. \textbf{208} (2017), no. 8, 145--167; translation in
	Sb. Math. \textbf{208} (2017), no. 7-8, 1225--1245. 
	
	\bibitem{SS} A. Sadullaev, S. Shopulatov, \textit{ The generalised Laplace operator and the topological characteristic of removable $\overline{S}$ - singular sets of subharmonic functions}, Complex Anal. Oper. Theory
	\textbf{15}, 50 (2021).
	
	\bibitem{W} F.-E. Wolter, \textit{Cut locus and medial axis in global shape interrogation and representation},
	Technical report, MIT Design
	Laboratory Memorandum 16, 1 (1992).
	\end{thebibliography}

\end{document}

Balinsky, A. (4-CARD-SM); Evans, W. D. (4-CARD-SM); Lewis, R. T. (1-AL2)
Hardy's inequality and curvature. (English summary)
J. Funct. Anal. 262 (2012), no. 2, 648–666.

 We provide some details for  the sake of completeness.

Pick a point $p$ in $\Omega\setminus\Sigma$ then there is a unique $q\in\pa\Om$ such that
$d_{\pa\Om}(p)=\Vert p-q\Vert $. By a complex linear change of coordinates near $q$ we can assume that $\frac{\pa}{\pa z_j},\ j=1,\cdots,n-1$ span the complex tangent space to $\pa\Om$ at $q$ identified with the origin in the new coordinates, that  $\frac{\pa}{\pa x_j},\ j=1,\cdots, n-1,  \frac{\pa}{\pa _j}\ j=1,\cdots,n$ span the real tangent space $T_q\pa\Om$ and that locally $\pa\Om$ is given by
$$r(z):=-Re(z_n)+h(z',Im(z_n))=0,$$
($z'=z_1,\cdots,z_{n-1}$) for some $C^2$ real valued function $h$ such that $h(0',0)=\nabla h(0',0)=0$ - see \cite{HM}.
Changing the sign of $r$, if necessary, we may assume that $\frac{\pa}{\pa x_n}$ is the inward normal vector to $\pa\Om$ at $q$ (the signs are chosen so that $r$ is locally a defining function for $\Om$).

The pseudoconvexity of $\pa\Om$ forces that

\begin{equation}\label{atq}
	[\frac{\pa^2 h}{\pa z_j\pa\overline{z}_k}(0)]_{j,k=1}^{n-1}\geq 0
\end{equation}
as Hermitian matrices.

For every $w$ sufficiently near $p$ there is a unique boundary point $$\Pi(w):=(z'(w),h(z'(w),Im(z_n(w))),Im(z_n(w)))$$
near $q$ realizing the distance. In fact the components of $\Pi$  are $C^1$ functions of $w$ (see \cite{Fe}, \cite{KP}). 

Just as in \cite{KP}, differentiating $\Vert w-(z',h(z',Im(z_n)),Im(z_n))\Vert ^2$ in the $z$ variables and using the extremality we obtain
\begin{equation}\label{diffz}
	z_k-w_k=-2(Re(z_n)-h(z',Im(z_n)))\frac{\pa h}{\pa\overline{z}_k}(z',Im(z_n)),\ k=1,\cdots,n-1
\end{equation}
$$Im(z_n)-Im(w_n)=-(Re(z_n)-h(z',Im(z_n)))\frac{\pa h}{\pa y_n}(z',Im(z_n)).$$

From (\ref{diffz}) one obtains (c.f. \cite{KP})
$$d_{\pa\Om}(w)=[Re(w_n)-h]\sqrt{1+(\frac{\pa h}{\pa y_n})^2+ 4\sum_{j=1}^{n-1}|\frac{\pa h}{\pa \overline{z}_k}|^2}.$$

Coupling this again with (\ref{diffz}) and recalling that $\nabla d_{\pa\Om}(w)=\frac{w-\Pi(w)}{d_{\pa\Om}(w)}$ we get the formulae
\begin{equation}\label{gradatw}
	\frac{\pa d_{\pa\Om}}{\pa\overline{w}_{k}}(w)=-\frac{\frac{\pa h}{\pa \overline{z}_k}}{\sqrt{1+(\frac{\pa h}{\pa y_n})^2+ 4\sum_{j=1}^{n-1}|\frac{\pa h}{\pa \overline{z}_k}|^2}}(\Pi(w)), k=1,\cdots,n-1,
\end{equation}
$$\frac{\pa d_{\pa\Om}}{\pa\overline{w}_{n}}(w)=-\frac{\frac12+\frac i2\frac{\pa h}{\pa y_n}}{\sqrt{1+(\frac{\pa h}{\pa y_n})^2+ 4\sum_{j=1}^{n-1}|\frac{\pa h}{\pa \overline{z}_k}|^2}}(\Pi(w)).$$

Taking the second derivatives at $p$ and exploiting the vanishing of $\nabla h$ at the origin we obtain

\begin{equation}\label{hessatp}
	\frac{\pa^2 d_{\pa\Om}}{\pa w_j\pa\overline{w}_{k}}(p)=-
	\bibitem{B} P. Blanchet, \textit{On removable singularities of subharmonic and plurisubharmonic functions}, Complex Variables \textbf{26} (1995), 311--322.

\bibitem {BM} J. E. M. Braga, D. Moreira, \textit{Zero Lebesgue measure sets as removable sets for degenerate fully nonlinear elliptic PDEs}, NoDEA Nonlinear Differ. Equ. Appl. \textbf{25} (2018). no. 2, Paper No. 11, 12 pp.

\bibitem{CC} L. Caffarelli, X. Cabr\'e, \textit{Fully Nonlinear Elliptic Equations}, American Mathematical Society Colloquium Publications, \textbf{43} American Mathematical Society, Providence, RI, (1995). 

\bibitem{CLN} L. Caffarelli, Y. Li and L. Nirenberg, \textit{Some remarks on singular solutions of nonlinear elliptic equations III: viscosity solutions including parabolic operators},
Comm. Pure Appl. Math. \textbf{66} (2013), no. 1, 109--143.

\bibitem{Ch} E. M. Chirka, \textit{On the removal of subharmonic singularities of plurisubharmonic functions},  Ann. Polon. Math. \textbf{80} (2003), 113--116.

\bibitem{EGZ} P. Eyssidieux, V. Guedj and A. Zeriahi, \textit{Viscosity solutions to degenerate complex Monge-Amp\`ere equations}, Comm. Pure Appl. Math. \textbf{64} (2011), no. 8, 1059--1094.

\bibitem{Ga} S. Gardiner,
\textit{Removable singularities for subharmonic functions},
Pacific J. Math. \textbf{147} (1991), no. 1, 71--80.

\bibitem{Ha} R. Harvey, \textit{Removable singularities for positive currents}, Amer. J. Math. \textbf{96} (1974), 67--78.

\bibitem{HL} R. Harvey, B. Lawson, \textit{Removable singularities for nonlinear subequations},
Indiana Univ. Math. J. \textbf{63} (2014), no. 5, 1525--1552.

\bibitem{H} L. H\"ormander, \textit{Notions of Convexity}, Progress in Mathematics, \textbf{127}. Birkh\"auser Boston, Inc., Boston, MA, (1994).

\bibitem{M} 
H. Miyachi, \textit{Pluripotential theory on Teichm\"uller space I: Pluricomplex Green function}, Conform. Geom. Dyn. \textbf{23} (2019), 221--250.

\bibitem{Ri} J. Riihentaus, \textit{Removability results for subharmonic functions, for harmonic functions and for holomorphic functions},
Mat. Stud. \textbf{46} (2016), no. 2, 152--158.

\bibitem{Sa} A. Sadullaev, Zh. Yarmetov, \textit{
	Removable singularities of subharmonic functions in the class $\text{Lip}_{\alpha}$}. (Russian. Russian summary)
Mat. Sb. \textbf{186} (1995), no. 1, 131--148; translation in
Sb. Math. \textbf{186} (1995), no. 1, 133--150. 

\bibitem{S} B. Shiffman, \textit{Extension of positive line bundles and meromorphic maps},
Invent. Math. \textbf{15} (1972), no. 4, 332--347.

\bibitem{Sw} A. \'{S}wi\k{e}ch, \textit{A note on the upper perturbation property and removable sets for fully nonlinear degenerate elliptic PDE}, NoDEA Nonlinear Differ. Equ. Appl. \textbf{26} (2019), no. 1, Paper No. 3, 4 pp.

near the boundary (for $x : d_{\pa\Om}(x)< \frac{1}{(m-2)|\min\{0,\min_{\partial\Omega} H\}|} $) and throughout

{\bf For convex domains one has $H\geq 0$ so the condition (expectedly) holds, check whether pseudoconvex $\implies$ $H\geq \frac{-1}{(m-2) R}$ seems likely to me! You proved $\kappa_1+\kappa_2\geq 0,\ldots,\kappa_{2n-3}+\kappa_{2n-2}\geq 0$ (the same observation is in Gromov's monograph on page 24) Essentially it is up ta an estimate of $\kappa_{2n-1}$ from below! }

{\bf we may add here that this is related and explain exactly how to the work M. Gromov, \textit{ 	Plateau-Stein manifolds}
	Cent. Eur. J. Math. \textbf{12} (2014), no. 7, 923–951. }
\bigskip\bigskip
\hrule 
\bigskip
New ideas container, Scratchbox, TODO:

Seriously think about a characterization of what kind of domains one obtains by the condition $-d_{\pa\Om}(\cdot)$ is plurisubharmonic.!!!!!!! If $n=1$ then $\Omega$ is convex (there are no mean convex, nonconvex domains in $\mathbb R^2$) 

Working conjecture: this has something to do with complex versions of the principal curvatures of $\partial\Omega$, maybe the mean curvature nonnegative and the sectional curvature nonnegative in complex tangent directions (?), the eventual property should be stronger than pseudoconvexity.

Maybe also consider the corresponding problem for the signed distance.

It is known (Armitage-Kuran): subharmonicity of signed distance function is equivalent to convexity of $\Omega$. What do we get under plurisubharmonicity of the signed distance?

Check for possible $C^2$ rectifiability of the singular set $\Sigma$ of $d_{\pa\Om}(x)$ when $\Om$ has $C^{2,1}$ smooth boundary! there are some recent results in this direction!

Some $C^2$ rectifiable sets can not be the singular set of $d_{\pa\Om}(x)$ for topological reasons e.g., the circle in the plane.

$-d_{\pa\Om}(\cdot)$ is always semiconvex (e.g. Lemma 14 in \cite{Ha}). It follows that what we denote by $\Gamma$ is always $C^2$ rectifiable. The problem is about $\Sigma=\overline{\Gamma}$.

If not done somewhere maybe study the Hausdorff dimension of $\Sigma$ for $C^{2,\alpha}$ domains. Working conjecture: $\mathcal H^{2n-\alpha}(\Sigma)=0$.

In the book:

Alexander A. Balinsky
W. Desmond Evans
Roger T. Lewis
The Analysis
and Geometry
of Hardy's
Inequality they do not seem to know about the Crasta-Malusa result that $\Sigma$ has vanishing Lebesgue measure (p. 117)

Maybe think of Hardy inequalities on nonsmooth domains which are mean convex (following the book: Alexander A. Balinsky
W. Desmond Evans
Roger T. Lewis
The Analysis
and Geometry
of Hardy's
Inequality)
\bigskip
\hrule
\bigskip\bigskip

 Actually, any domain $\Omega$ with $C^{2,\alpha}, \alpha>0$ smooth boundary is a sublevel set of a (strongly) subharmonic function. Such a function can be constructed by extending the $C^{2,\alpha}(\overline{\Omega})$ solution to the Dirichlet problem
$$\begin{cases}
	\Delta u= 1, & \text{ in } \Omega\\
	u=0, & \text{ on } \partial \Omega
\end{cases}$$ 
which exists by Theorem 6.19 in \cite{GT}. {\bf Enlighten me whether this remains true for $C^2$ domains!}
\end{rem}